\newcommand{\Z}{\mathbb{Z}}
\newcommand{\N}{\mathbb{N}}
\newcommand{\abs}[1]{\left\vert{#1}\right\vert} 
\newcommand{\hole}{\diamondsuit}
\begin{document}

\title{A Connection Between Unbordered Partial Words and Sparse Rulers\thanks{Supported by the Academy of Finland under grants 339311 and 346566}}

\author{Aleksi Saarela  \and
Aleksi Vanhatalo}

\institute{Department of Mathematics and Statistics, University of Turku, Finland, \texttt{aleksi.saarela@utu.fi}, \texttt{aleksi.e.vanhatalo@utu.fi}}

\maketitle             

\begin{abstract}
    $\textit{Partial words}$ are words that contain, in addition to letters, special symbols $\diamondsuit$ called $\textit{holes}$. Two partial words of $a=a_0 \dots a_n$ and $b=b_0 \dots b_n$ are $\textit{compatible}$ if for all $i$, $a_i = b_i$ or at least one of $a_i, b_i$ is a hole. A partial word is $\textit{unbordered}$ if it does not have a nonempty proper prefix and a suffix that are compatible. Otherwise the partial word is $\textit{bordered}$.
    A set $R \subseteq \{0, \dots, n\}$ is called a $\textit{complete sparse ruler of length $n$}$ if for all $k \in \{0, \dots, n\}$ there exists $r, s \in R$ such that $k = r - s$. These are also known as $\textit{restricted difference bases}$.
    From the definitions it follows that the more holes a partial word has, the more likely it is to be bordered. By introducing a connection between unbordered partial words and sparse rulers, we improve bounds on the maximum number of holes an unbordered partial word can have over alphabets of sizes $4$ or greater. We also provide a counterexample for a previously reported theorem.
    We then study a two-dimensional generalization of these results. We adapt methods from one-dimensional case to solve the correct asymptotic for the number of holes an unbordered two-dimensional binary partial word can have. This generalization might invoke further research questions.
\end{abstract}

\section{Introduction}
Partial words are words with special hole symbol, denoted usually by $\hole$. They were introduced, together with a new word relation where holes are considered to be compatible with any letter, by Berstel and Boasson in \cite{berstel1999partial}. They wanted a systematic, well-defined way to study words with some letters missing and replaced by ``do not know'' symbols. Since then many classical word combinatorics results have been considered in context of partial words and this new relation, see for example \cite{halava2009overlap} and \cite{halava2008square}.

In \cite{blanchet2009many} and \cite{Combinatorics-on-partial-word-borders} Blanchet-Sadri, Allen et al. considered unbordered partial words with maximum number of holes. They were able to find the closed form for this maximum in binary alphabet, a general upper bound depending of the size of the alphabet and lower bounds in alphabet sizes $3$ and $4$. 

Sparse rulers are seemingly unrelated combinatorial objects. The intuitive idea of sparse ruler is to have a ruler with low number of marks which is still able to measure every distance less than its length between its marks. Sparse rulers have been studied by number theorist atleast from late 1940s, for example in \cite{erdos1948representation}, \cite{redei1949representation} and \cite{Leech} under the name of restricted difference bases.

In section $3$ we show a connection of unbordered partial words and sparse rulers. Using results from the theory of rulers we are then able to correct and improve some results on unbordered partial words. In particular, we get a new best upper and lower bounds for the number of holes in large enough alphabets. As a necessary tool we also provide a fairly simple proof for the correctness of the currently best known ruler construction, something which was originally omitted by Wichmann in \cite{Wichmann}. The connection and some its corollaries were first reported in second author's master's thesis \cite{Gradu} (in Finnish).

In section $4$ we consider the two-dimensional case of unbordered partial words and rulers. We solve the correct asymptotic for the number of holes in $n$ by $m$ unbordered binary partial words, and prove some other preliminary results. While 2D sparse rulers have known applications in sensor placements, see \cite{sparsearraysovellus} and \cite{soveltajat2dviivoittimet}, we are not aware of any pure mathematical research on the subject.

\section{Partial words}
In this section we present the basic definitions concerning partial words. We use slightly simplified version of some definitions, but all equivalent to more commonly used variants.
\begin{definition}
    Partial word over an alphabet $A$ is a word over the alphabet $A_\hole = A \cup \{\hole\}$ where $\hole \notin A$. The symbol $\hole$ is called a hole and is thought to be a ``do not know'' symbol in a word over $A$. The set of all finite partial words is denoted by $A_\hole^*$.
\end{definition}

Partial words without holes (that is, words over the alphabet $A$) are full words. The length of a partial word is measured similarly to a full word. We call all elements in $A \cup \{\hole\}$ symbols while only elements in $A$ are called letters.

\begin{definition} Let $w=w_0w_1 \ldots w_n$ be a partial word over $A$.
Denote by $\mathrm{D}(w)$ the set $\{i_0, i_1, \ldots i_m\}$ where $w_{j} \in A \iff j=i_k$ for some $k$. The set $\mathrm{D}(w)$ is called the domain of $w$.
\end{definition}
Now we can define the compatibility relation between partial words.
\begin{definition}
    Partial words $w=w_0w_1\ldots w_n$ and $v=v_0v_1\ldots v_m$ are compatible if $n=m$ and for all $i \in \mathrm{D}(w) \cap \mathrm{D}(v)$ we have $w_i = v_i$. 
    This is denoted by $w \uparrow v$.
\end{definition}
    In intuitive sense, this means that compatible partial words could be equal if all the unknown symbols could be identified.
\begin{example}
    Let $A=\{a,b,c\}$. Then partial words $abc$, $\hole cc$ and $a\hole c$ are all elements of $A_\hole^*$, have all length $3$ and $abc \uparrow a\hole c$, $\hole cc \uparrow a\hole c$ but $\hole cc \not\uparrow abc$.
\end{example}

The main topic of this paper can now be defined.

\begin{definition} Border of a partial word $w$ is a pair $(x,y)$ such that $x$ is a proper prefix of $w$, $y$ is a proper suffix of $w$ and $x \uparrow y$. A partial word without any borders is called unbordered partial word.
\end{definition}

From the definitions it is clear that the more holes a partial word has, the more likely it is to be bordered. This leads to interesting question of maximum number of holes in length-$n$ unbordered partial word over a given alphabet. Adding to the notation in \cite{Combinatorics-on-partial-word-borders} we also want a notation in the case of arbitrary alphabet.

\begin{definition}
 $\mathrm{HB}_k(n)$ denotes the maximum number of holes an unbordered partial word of length $n$ can have over an alphabet of size $k$. Similarly, $\mathrm{HB}_\infty (n)$ denotes the maximal number of holes an unbordered partial word can have over an infinite alphabet, i.e. $\mathrm{HB}_\infty (n) = \mathrm{HB}_n(n)$.
\end{definition}

In \cite{Combinatorics-on-partial-word-borders} and \cite{blanchet2009many}, Allen, Blanchet-Sadri and others provived a closed form of $\mathrm{HB}_2$ and bounds for larger alphabet sizes. The bounds we state in Theorem $\ref{original bounds}$ are a simplified version of the ones in \cite{blanchet2009many}. In some particular cases of $n$ the lower bound for $\mathrm{HB}_4$ is better in \cite{blanchet2009many}, but still something we will improve upon in chapter $3$.

\begin{theorem}[\cite{Combinatorics-on-partial-word-borders} and \cite{blanchet2009many}]
\label{original bounds}
For all $n> 9$ and $k\geq 4$,
    $$ n- \lceil 2\sqrt{n+3} \rceil +2 \leq \mathrm{HB}_3(n)\leq \mathrm{HB}_4(n) \leq \mathrm{HB}_k(n) \leq n- \sqrt{\frac{2k}{k-1}(n-1)}.$$
\end{theorem}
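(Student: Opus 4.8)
The plan is to route everything through a reformulation of unborderedness in terms of the set of letter positions $\mathrm{D}(w)$ and the differences it realizes; this is precisely the sparse-ruler connection the paper will exploit. Write $w = w_0 \cdots w_{n-1}$ and observe that a border of length $\ell$ exists exactly when the length-$\ell$ prefix is compatible with the length-$\ell$ suffix, i.e. when, at shift $d = n - \ell$, no position $i$ has both $w_i$ and $w_{i+d}$ equal to letters that differ. Hence $w$ is unbordered if and only if for every shift $d \in \{1, \dots, n-1\}$ there is a pair $i, i+d \in \mathrm{D}(w)$ carrying two different letters. In particular the extreme shift $d = n-1$ forces $0, n-1 \in \mathrm{D}(w)$ with $w_0 \neq w_{n-1}$, and more generally $\mathrm{D}(w)$ must be a complete sparse ruler of length $n-1$ whose marks can be colored by the alphabet so that every difference is witnessed by a bichromatic pair.

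For the upper bound I would double count. Let $s = \abs{\mathrm{D}(w)} = n - h$, where $h$ is the number of holes, and for each letter $a$ let $m_a$ be the number of positions of $w$ equal to $a$, so $\sum_a m_a = s$. A pair of letter positions carrying two different letters is a bichromatic pair, and the number of such pairs is $\binom{s}{2} - \sum_a \binom{m_a}{2}$. Since each of the $n-1$ shifts needs its own bichromatic witness, this quantity must be at least $n-1$. Minimizing $\sum_a \binom{m_a}{2}$ subject to $\sum_a m_a = s$ over an alphabet of size $k$ via Cauchy--Schwarz gives $\sum_a m_a^2 \geq s^2 / k$, hence the number of bichromatic pairs is at most $\tfrac{1}{2}(s^2 - s^2/k) = \tfrac{s^2(k-1)}{2k}$. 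Requiring $\tfrac{s^2(k-1)}{2k} \geq n-1$ yields $s \geq \sqrt{\tfrac{2k}{k-1}(n-1)}$, and therefore $h = n - s \leq n - \sqrt{\tfrac{2k}{k-1}(n-1)}$, which is the claimed upper bound.

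The monotonicity $\mathrm{HB}_3(n) \leq \mathrm{HB}_4(n) \leq \mathrm{HB}_k(n)$ is immediate: every unbordered partial word over an alphabet of size $j$ is also an unbordered partial word over any larger alphabet, so enlarging the alphabet cannot decrease the achievable number of holes. For the lower bound $n - \lceil 2\sqrt{n+3}\rceil + 2 \leq \mathrm{HB}_3(n)$ I would exhibit a construction meeting the ruler characterization above. The natural candidate is a two-block ruler: take marks at $0, 1, \dots, p$ together with $p, 2p, \dots, qp$ with $pq \approx n-1$ and $p \approx q \approx \sqrt{n}$, which realizes every difference in $\{1, \dots, n-1\}$ using about $2\sqrt{n}$ marks, matching $\lceil 2\sqrt{n+3}\rceil - 2$ after optimizing the constants. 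I would then $3$-color these marks, placing all holes elsewhere, so that every difference is realized by at least one bichromatic pair, which is where a third letter becomes essential.

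The hard part will be the lower bound. Producing a ruler with exactly $\lceil 2\sqrt{n+3}\rceil - 2$ marks for all $n$ (handling the floors and ceilings and the off-by-a-small-constant bookkeeping across residue classes of $n$), and then certifying a $3$-coloring under which no shift is entirely monochromatic, is the delicate step; the upper bound and the monotonicity are comparatively routine once the sparse-ruler reformulation is in place.
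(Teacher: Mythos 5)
Your reformulation of unborderedness via $\mathrm{D}(w)$ and bichromatic pairs is exactly the sparse-ruler correspondence this paper builds on (its Theorem~3), and your upper-bound count is the same Tur\'an-type argument the paper itself deploys in the two-dimensional analogue (its Theorem~12): each unordered pair of letter positions witnesses exactly one difference $d=|i-j|$, all $n-1$ differences need a bichromatic witness, and over $k$ letters the number of bichromatic pairs is at most $\tfrac{k-1}{2k}s^2$ by convexity, forcing $s \geq \sqrt{\tfrac{2k}{k-1}(n-1)}$. That part of your argument, together with the trivial alphabet-monotonicity, is complete and correct. (Note the paper states this theorem as a citation to Blanchet-Sadri et al.\ and gives no proof of its own, so the benchmark is the cited argument, which yours matches.)

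The genuine gap is the leftmost inequality, $n - \lceil 2\sqrt{n+3}\rceil + 2 \leq \mathrm{HB}_3(n)$, which you describe but do not prove. Two things are missing. First, you never exhibit the $3$-coloring of the two-block ruler and verify that every difference has a bichromatic witness; this part is salvageable, since a word such as $a^{p-1} b (\hole^{p-1} c)^{q}$ of length $p(q+1)$ has a two-block domain with $p+q$ letters, and every difference $1 \leq d \leq n-1$ is witnessed either by a pair involving the single $b$ or by an $a$--$c$ pair crossing the blocks --- indeed this is essentially the construction the paper itself uses (with letters $a,b,c$) for its small-$n$ lower bound on $\mathrm{HB}_4$. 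Second, and more seriously, ``matching $\lceil 2\sqrt{n+3}\rceil - 2$ after optimizing the constants'' is an IOU: the generic choice $p \approx q \approx \sqrt{n}$ gives $2\sqrt{n} + \mathrm{O}(1)$ marks, but the stated bound is exact for every $n > 9$, and hitting it requires choosing $p,q$ as functions of $n$ with a case analysis on where $n$ sits relative to nearby squares (this is also where the hypothesis $n > 9$, which your argument never locates, must enter). As written, your proposal establishes the upper bound and the monotonicity chain but leaves the $\mathrm{HB}_3$ lower bound unproven.
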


\section{The connection with sparse rulers}
We show a new connection between unbordered partial words and an independently studied combinatorical problem of sparse rulers. This proves to be very fruitful connection as the theory of rulers has more powerful theorems than the partial words counterparts.

\begin{definition} 
A subset $R \subseteq \{0,1,\ldots n\}$ containing the numbers $0$ and $n$ is a sparse ruler (or just a ruler) of length $n$. The elements of a ruler are called marks. The ruler $R$ can measure the distance $d$ if there exists $i \in R$ and $j \in R$ so that $\abs{i-j}=d$. If the ruler can measure all distances $d \in \{0,1,\ldots n\}$, then the ruler is complete.
\end{definition}
\begin{example}
    The set $\{0,1,4,6\}$ can easily be verified to be an example of a complete sparse ruler of length $6$.
\end{example}
The combinatorial optimization problem that comes with complete sparse rulers is to minimize the number of marks needed to construct a complete sparse ruler with given length $n$.
\begin{definition}
    $\mathrm{M}_1(n)$ is the minimum number of marks a length $n$ complete sparse ruler can have.
\end{definition}
 The following lower bound for the function $\mathrm{M}_1(n)$ was originally found by R\'edei and R\'enyi \cite{redei1949representation} in 1949 and then improved marginally by Leech \cite{Leech} in 1956 and then again by Bernshteyn and Tait \cite{bernshteyn2019improved} in 2019. We present the lower bound by Leech. For the upper bounds, see chapter about Wichmann-rulers.

\begin{theorem}[\cite{Leech} and \cite{redei1949representation}] \label{ruler lower bound}
$$\mathrm{M}_1(n) \geq \max_{0<\delta<2\pi} \sqrt{2n+1- \frac{\sin{\delta}}{\sin{\frac{\delta}{2n+1}}}}>\max_{0<\delta<2\pi}\sqrt{2(1-\frac{\sin{\delta}}{\delta})n +1-\frac{\sin{\delta}}{\delta}}.$$ 
Or approximately 
$$\mathrm{M}_1(n)>\sqrt{2.43n}.$$

\end{theorem}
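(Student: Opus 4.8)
The plan is to run an exponential-sum (Fourier) argument on the mark set. Fix any complete sparse ruler $R = \{a_1, \dots, a_m\}$ of length $n$ with $m = \abs{R}$ marks, and attach to it the trigonometric sum $g(\theta) = \sum_{j=1}^m e^{i a_j \theta}$. Expanding the square gives $\abs{g(\theta)}^2 = m + 2\sum_{d=1}^n r_d \cos(d\theta)$, where $r_d$ counts the ordered pairs of marks at distance $d$. Two facts drive everything: completeness forces $r_d \ge 1$ for every $d \in \{1,\dots,n\}$, while counting all ordered pairs of distinct marks gives $\sum_{d=1}^n r_d = \binom{m}{2} = (m^2-m)/2$.

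The key step is to recognize $\frac{\sin\delta}{\sin(\delta/(2n+1))}$ as a Dirichlet kernel. Writing $N = 2n+1$ and $\phi = \delta/N$, the identity $1 + 2\sum_{d=1}^n \cos(2d\phi) = \frac{\sin(N\phi)}{\sin\phi} = \frac{\sin\delta}{\sin(\delta/(2n+1))}$ lets me recast the target bound $m^2 \ge 2n+1 - \frac{\sin\delta}{\sin(\delta/(2n+1))}$ into the equivalent form $m^2 \ge 2\sum_{d=1}^n (1 - \cos(2d\phi))$. This reformulation is the crux, since both $1 - \cos(2d\phi)$ and $\abs{g}^2$ are nonnegative, and those two nonnegativities turn out to push in compatible directions.

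I would then prove the reformulated inequality by sandwiching the weighted sum $\sum_{d=1}^n r_d(1 - \cos(2d\phi))$. From below, because $1 - \cos(2d\phi) \ge 0$ and $r_d \ge 1$, it is at least $\sum_{d=1}^n(1 - \cos(2d\phi))$. From above, I rewrite it as $(m^2-m)/2 - \sum_{d=1}^n r_d\cos(2d\phi)$ and use $\abs{g(2\phi)}^2 = m + 2\sum_{d=1}^n r_d\cos(2d\phi) \ge 0$, i.e. $\sum_{d=1}^n r_d\cos(2d\phi) \ge -m/2$, which bounds the sum by $m^2/2$. Combining the two estimates gives $\sum_{d=1}^n(1-\cos(2d\phi)) \le m^2/2$, exactly the target. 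Since $\delta \in (0,2\pi)$ is arbitrary and the estimate holds for every complete ruler, taking the maximum over $\delta$ and applying it to a minimal ruler yields the first inequality.

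For the second, strict inequality I would compare the two radicands pointwise: noting that the lower one equals $(2n+1)(1-\frac{\sin\delta}{\delta})$, the comparison reduces to $\frac{\sin\delta}{\sin(\delta/N)} < \frac{\sin\delta}{\delta/N}$, which by $\sin x < x$ holds precisely when $\sin\delta < 0$, i.e. for $\delta \in (\pi, 2\pi)$. The point to verify is that both maxima are attained in this range, since away from it the radicands stay near the trivial value $2n$, so the pointwise comparison transfers to the maxima. The numerical estimate $\sqrt{2.43n}$ then follows by evaluating $(2n+1)(1-\frac{\sin\delta}{\delta})$ at the maximizing $\delta^\ast \approx 4.493$, the first positive root of $\tan\delta = \delta$, where $\frac{\sin\delta^\ast}{\delta^\ast} \approx -0.217$ produces the factor $2(1+0.217) \approx 2.43$. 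The one genuinely delicate point is the upper half of the sandwich, which is exactly where nonnegativity of $\abs{g}^2$ at the single well-chosen point $\theta = 2\phi$ gets used.
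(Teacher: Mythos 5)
The paper states this theorem as a cited result of R\'edei--R\'enyi and Leech and gives no proof of its own, and your argument is a correct reconstruction of exactly the classical trigonometric-sum proof behind those citations: the Dirichlet-kernel identity $1+2\sum_{d=1}^{n}\cos(2d\phi)=\sin((2n+1)\phi)/\sin\phi$, the bounds $r_d\geq 1$ and $\sum_d r_d=\binom{m}{2}$, and nonnegativity of $\abs{g(2\phi)}^2$ combine precisely as you say to give $m^2\geq 2n+1-\sin\delta/\sin(\delta/(2n+1))$ for every $\delta$. Your handling of the strict second inequality is also sound (only the location of the right-hand maximizer $\delta^\ast\approx 4.493$, where $\sin\delta^\ast<0$, is actually needed, since then $\max_\delta \mathrm{LHS}\geq \mathrm{LHS}(\delta^\ast)>\mathrm{RHS}(\delta^\ast)=\max_\delta \mathrm{RHS}$), as is the numerical evaluation yielding $\sqrt{2.43n}$.
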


We now show how the rulers are linked to the unbordered partial words. A part of the proof is equivalent to an earlier result by Allen et al. (\cite{weakperiod}, Proposition 1.), but there was no mention of connection to sparse rulers.

\begin{theorem} For all unbordered partial words $w$, the domain $\mathrm{D}(w)$ is a complete sparse ruler of length $\abs{w}-1$. Conversely, for all complete sparse rulers $R$, there exist an unbordered partial word $w$ over some alphabet such that $R=\mathrm{D}(w)$.
\label{correspondense theorem}
\end{theorem}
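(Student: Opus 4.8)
The plan is to set up a dictionary between ``missing distances'' of the domain and ``borders'' of the word, after which both directions fall out of the definitions. Throughout I would write $w = w_0 w_1 \cdots w_n$, so that $\mathrm{D}(w) \subseteq \{0, \ldots, n\}$ and $n = \abs{w}-1$. The central observation I would isolate first is this: for a shift $d \in \{1, \ldots, n\}$, the prefix $x = w_0 \cdots w_{n-d}$ and the suffix $y = w_d \cdots w_n$ have equal length $n-d+1$ and overlap position-by-position as $w_i$ against $w_{i+d}$ for $i \in \{0, \ldots, n-d\}$. Hence $(x,y)$ is a border (of length $n+1-d$) precisely when $w_i \uparrow w_{i+d}$ for every such $i$, i.e. exactly when no index $i$ has both $i$ and $i+d$ in $\mathrm{D}(w)$ carrying distinct letters.

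For the forward direction, assume $w$ is unbordered. First I would check that $0, n \in \mathrm{D}(w)$: if $w_0$ (resp.\ $w_n$) were a hole, then the length-one prefix $w_0$ and suffix $w_n$ would be compatible, yielding a nonempty border and contradicting unborderedness. Next, to see that $\mathrm{D}(w)$ is complete I argue by contraposition from the observation above: if some distance $d \in \{1, \ldots, n-1\}$ were not measurable, then for every $i$ at least one of $w_i, w_{i+d}$ is a hole, so $w_i \uparrow w_{i+d}$ for all $i \in \{0, \ldots, n-d\}$, producing a border of length $n+1-d$ — a contradiction. The distances $0$ and $n$ are measured by a single mark and by the pair $(n,0)$, so every $d \in \{0, \ldots, n\}$ is measurable and $\mathrm{D}(w)$ is a complete ruler of length $n$.

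For the converse, given a complete ruler $R$ of length $n$ I would build $w$ by placing holes at every position of $\{0, \ldots, n\} \setminus R$ and pairwise distinct letters at the positions of $R$ (an alphabet of size $\abs{R}$ suffices), so that $\mathrm{D}(w) = R$ by construction. To verify that $w$ is unbordered I use the observation once more: any border would correspond to a shift $d \in \{1, \ldots, n\}$ for which $w_i \uparrow w_{i+d}$ holds at every overlapping $i$. But completeness of $R$ supplies marks $s, s+d \in R$, and since the letters on $R$ are distinct we get $w_s \neq w_{s+d}$ with both symbols letters, so $w_s \not\uparrow w_{s+d}$ blocks that shift. As this works for every $d$, no border exists.

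The only place demanding care is the index bookkeeping that links a border of length $\ell$ to the shift $d = n+1-\ell$, together with the check that the witnessing mark $s$ really lies in the overlap range $\{0, \ldots, n-d\}$ (which follows from $s + d \le n$); everything else is a direct unwinding of the compatibility relation. I would also dispatch the degenerate very-short cases separately, since the length-one border argument presumes $\abs{w} \ge 2$.
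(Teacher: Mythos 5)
Your proposal is correct and follows essentially the same route as the paper's proof: the same correspondence between a shift $d$ and a border of length $\abs{w}-d$ drives the forward direction (your contrapositive phrasing is just a cosmetic variant of the paper's direct argument), and your converse construction with pairwise distinct letters on the marks is exactly the paper's word $v_i = i$ for $i \in R$. Your explicit checks that $0,n \in \mathrm{D}(w)$ and your flagging of the length-one degenerate case are slightly more careful than the paper's write-up, but they do not change the argument.
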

\begin{proof}
    Let $w=w_0w_1\ldots w_{n-1}$ be an unbordered partial word and let $0<l < n$.
    Since $w$ has no borders, its prefix $x=w_0 \ldots w_{l-1}$ and suffix $y=w_{n-l} \ldots w_{n-1}$ of length $l$ are uncompatible. This means that for some $i\leq l-1$ we have $w_i \not \uparrow w_{n-l+i}$ and thus $w_i \neq \hole$ and $w_{n-l+i} \neq \hole$. This means that $i$ and $n-l+i$ are elements of the set $\mathrm{D}(w)$. Since $n-l+i-i=n-l$, the set $\mathrm{D}(w)$ can represent the distance $n-l$ using differences. Since $l \in \{1,2,\ldots,n-1\}$ was arbitrary, $\mathrm{D}(w)$ can represent all numbers $n-l \in \{1,2,\ldots n-1\}$ by differences and thus $\mathrm{D}(w)$ is a complete sparse ruler of length $n-1 = \abs{w}-1$.

    For the converse statement, assume $R=\{r_0, r_1, r_2, \ldots r_{m-1}\}$ to be a complete sparse ruler of length $n-1$ with $m$ marks. We construct a partial word $v = v_0 v_1 \ldots v_{n-1} $ over the alphabet $R$ such that 
    $$ v_i = 
    \begin{cases}
        i, & \text{if } i \in R \\
        \hole, & \text{otherwise}
    \end{cases}$$
    Now clearly $\mathrm{D}(v)=R$. Now let $l$ be a length of a proper prefix $x$ of $v$ and let $y$ be the same length suffix. Since ruler $R$ can represent the length $n-l$ by difference, there are marks $r_i$ and $r_j=r_i + n-l$ in the ruler $R$. Thus in the word $v$ there are different letters placed at the index positions $r_i$ and $r_i + n - l.$ But the index $r_i + n -l$ in a word $v$ is the same position as the index $r_i$ of the suffix $y$, and thus the prefix $x$ and suffix $y$ of the word $v$ have both a letter in the index position $r_i$. This means that $x \not \uparrow y$ and the word $v$ has no border of length $l$. Since $l$ was arbitrary, $v$ is unbordered.
\end{proof}

\begin{figure}
    \centering
    \includegraphics[width=\textwidth]{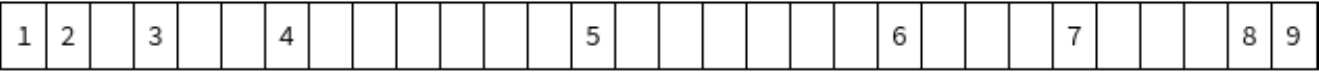}
    \caption{Ruler $\{0, 1, 3, 6, 13, 20, 24, 28, 29\}$ of length $29$ and equivalent unbordered partial word of length $30$ over the alphabet $\{1,2,3,\ldots,9\}$. Here the hole symbol is just an empty space.}
    \label{rulerwordfigure}
\end{figure}

Figure $\ref{rulerwordfigure}$ demonstrates the two-way correspondence. This connection is particularly interesting when considering the maximum number of holes an unbordered partial word can have, since (without fixed alphabet) the question can be restated as how many marks does a length $n$ complete sparse ruler need. 

\begin{corollary} For all $k \geq \mathrm{M}_1(n-1)$, 
$$\mathrm{HB}_k(n) = \mathrm{HB}_\infty (n) = n- \mathrm{M}_1(n-1).$$
\label{optimal corresponding}
\end{corollary}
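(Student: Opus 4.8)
The plan is to sandwich $\mathrm{HB}_k(n)$ between $n - \mathrm{M}_1(n-1)$ from both sides, using the two directions of Theorem \ref{correspondense theorem} together with the obvious monotonicity of $\mathrm{HB}$ in the alphabet size. The key bookkeeping observation is that a length-$n$ partial word has $n$ positions, and its number of holes equals $n - \abs{\mathrm{D}(w)}$, since the domain records exactly the positions carrying a letter. So maximizing holes is the same as minimizing $\abs{\mathrm{D}(w)}$.

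First I would establish the upper bound $\mathrm{HB}_\infty(n) \leq n - \mathrm{M}_1(n-1)$. By the first half of Theorem \ref{correspondense theorem}, if $w$ is an unbordered partial word of length $n$, then $\mathrm{D}(w)$ is a complete sparse ruler of length $n-1$, so by the definition of $\mathrm{M}_1$ we have $\abs{\mathrm{D}(w)} \geq \mathrm{M}_1(n-1)$. Hence the number of holes is at most $n - \mathrm{M}_1(n-1)$, and this bound is independent of the alphabet, giving $\mathrm{HB}_\infty(n) \leq n - \mathrm{M}_1(n-1)$. Moreover, since any unbordered word over a size-$k$ alphabet is also one over any larger alphabet with the same holes, we have $\mathrm{HB}_k(n) \leq \mathrm{HB}_\infty(n)$ for every $k$.

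Next, for the matching lower bound I would invoke the converse direction. Choose a complete sparse ruler $R$ of length $n-1$ attaining the minimum, i.e.\ with exactly $\mathrm{M}_1(n-1)$ marks. The construction in the proof of Theorem \ref{correspondense theorem} yields an unbordered partial word $v$ with $\mathrm{D}(v) = R$, whose number of holes is $n - \abs{R} = n - \mathrm{M}_1(n-1)$. The crucial detail is the alphabet: in that construction each mark receives a distinct letter, so $v$ lives over an alphabet of size exactly $\abs{R} = \mathrm{M}_1(n-1)$. Therefore, as soon as $k \geq \mathrm{M}_1(n-1)$, this word can be realized over a $k$-letter alphabet, yielding $\mathrm{HB}_k(n) \geq n - \mathrm{M}_1(n-1)$.

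Combining the two bounds gives the chain $n - \mathrm{M}_1(n-1) \leq \mathrm{HB}_k(n) \leq \mathrm{HB}_\infty(n) \leq n - \mathrm{M}_1(n-1)$, which forces all three quantities to coincide. The only step requiring genuine care -- and the reason the hypothesis $k \geq \mathrm{M}_1(n-1)$ cannot be dropped -- is the alphabet-size bookkeeping in the achievability argument: one must confirm that an optimal ruler can be realized as a domain without introducing more distinct letters than $k$ permits. Everything else is a direct translation between the two combinatorial languages already supplied by Theorem \ref{correspondense theorem}.
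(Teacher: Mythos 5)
Your proposal is correct and follows essentially the same route as the paper: both directions of Theorem \ref{correspondense theorem}, with the achievability side using the explicit construction (distinct letters on the marks of an optimal ruler, hence alphabet size exactly $\mathrm{M}_1(n-1)$) and the converse direction bounding the number of holes of any unbordered word. The paper phrases the upper bound as a proof by contradiction while you argue it directly, but this is a cosmetic difference, not a different argument.
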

\begin{proof}
    Let $R$ be a length $n-1$ complete sparse ruler with $\mathrm{M}_1(n-1)$ marks. By Theorem $\ref{correspondense theorem}$ we have corresponding unbordered partial word $w$ of length $n$ such that $\mathrm{D}(w)=R$ and so that $w$ is over an alphabet of size $\mathrm{M}_1(n-1)$. The number of holes in $w$ is $n-\mathrm{M}_1(n-1)$. Now assume that there exist an unbordered partial word $w'$ of length $n$ with at least $n-\mathrm{M}_1(n-1) +1$ holes. Then by Theorem $\ref{correspondense theorem}$, $\mathrm{D}(w')$ is a complete sparse ruler of length $n-1$. The number of elements in $\mathrm{D}(w')$ is at most $\mathrm{M}_1(n-1)-1$, which is a contradiction with the definition of $\mathrm{M}_1(n-1)$. This completes the proof.
\end{proof}

Note that the equality might already hold for smaller alphabet sizes. In fact, we didn't find any case where alphabet size of more than $4$ is needed. This observation is partly explained later in the chapter about Wichmann words.

\subsection{Applying ruler theory to unbordered partial words}

In this section, by Corollary \ref{optimal corresponding} we can directly translate some results of the function $\mathrm{M}_1$ into results of the functions $\mathrm{HB}_k$. It so happens that the study of rulers gives improved bounds on $\mathrm{HB}_k$ when $k \geq 6$ for upper bound and $k\geq 4$ for the lower bound. The lower bound is obtained in the next section about Wichmann rulers.

The new upper bound is obtained easily.
\begin{theorem}
    For all $k\geq 2$ and $n\geq 1$ 
    $$ \mathrm{HB}_k(n) \leq n- \sqrt{2.43(n-1)}.$$
    \end{theorem}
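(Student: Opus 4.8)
The plan is to combine the ``forward'' direction of the correspondence in Theorem~\ref{correspondense theorem} with the ruler lower bound of Theorem~\ref{ruler lower bound}. The key observation is that an \emph{upper} bound on $\mathrm{HB}_k(n)$ does not need the full equality of Corollary~\ref{optimal corresponding} (which only holds once $k$ is large enough); only one half of the correspondence is required, and that half applies to every alphabet size.

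First I would take an arbitrary unbordered partial word $w$ of length $n$ over an alphabet of size $k$, and let $h$ be its number of holes. By the first statement of Theorem~\ref{correspondense theorem}, the domain $\mathrm{D}(w)$ is a complete sparse ruler of length $n-1$, and its number of marks is exactly $\abs{\mathrm{D}(w)} = n - h$, since each non-hole position contributes precisely one mark. By the defining minimality of $\mathrm{M}_1(n-1)$, every complete sparse ruler of length $n-1$ has at least $\mathrm{M}_1(n-1)$ marks, so $n - h \geq \mathrm{M}_1(n-1)$, i.e. $h \leq n - \mathrm{M}_1(n-1)$. Since $w$ was arbitrary, this gives $\mathrm{HB}_k(n) \leq n - \mathrm{M}_1(n-1)$ for every $k \geq 2$. (Equivalently, one may note $\mathrm{HB}_k(n) \leq \mathrm{HB}_\infty(n)$ by monotonicity in the alphabet size and then invoke Corollary~\ref{optimal corresponding}.)

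Next I would substitute $n \mapsto n-1$ into Theorem~\ref{ruler lower bound} to obtain $\mathrm{M}_1(n-1) > \sqrt{2.43(n-1)}$. Chaining the two inequalities yields
$$\mathrm{HB}_k(n) \leq n - \mathrm{M}_1(n-1) < n - \sqrt{2.43(n-1)},$$
which is slightly stronger than the claimed bound and in particular implies it.

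There is no serious obstacle here: the statement is an immediate consequence once the correspondence is available. The only points needing a little care are (i) ensuring the argument uses \emph{only} the forward direction of Theorem~\ref{correspondense theorem}, so that it holds uniformly for all $k \geq 2$ rather than just for alphabets large enough to trigger Corollary~\ref{optimal corresponding}; and (ii) checking that applying the constant-$2.43$ bound at length $n-1$ is legitimate across the whole range $n \geq 1$, which is clear since the edge case $\mathrm{M}_1(0) = 1$ still satisfies $1 > \sqrt{2.43 \cdot 0}$ and the bound of Theorem~\ref{ruler lower bound} is valid for every nonnegative length.
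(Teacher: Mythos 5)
Your proof is correct and is essentially the paper's own argument: the paper's one-line proof derives the bound directly from Corollary~\ref{optimal corresponding} together with Theorem~\ref{ruler lower bound}, and your unpacking via the forward direction of Theorem~\ref{correspondense theorem} (equivalently, monotonicity in $k$ plus the corollary, as you note parenthetically) is exactly what that citation abbreviates. Your added care that only the upper-bound half of the correspondence is needed, so the estimate holds uniformly for all $k \geq 2$, is a sound and slightly more explicit rendering of the same route.
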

    \begin{proof}
        Clear result from Theorem $\ref{ruler lower bound}$ and Corollary $\ref{optimal corresponding}$.
    \end{proof}

Before going for the lower bound, which needs some additional work, we show the following unintuitive result from the known values of the function $\mathrm{M}_1$. This is direct counter-example to a result that was thought to be already proven in \cite{Combinatorics-on-partial-word-borders}.

\begin{corollary} For some $n \in \N$, $\mathrm{HB}_k(n+1)>\mathrm{HB}_k(n)+1$ for all $k \geq 4$.
\end{corollary}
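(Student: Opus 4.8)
The plan is to translate the claimed failure of the bound $\mathrm{HB}_k(n+1)\le \mathrm{HB}_k(n)+1$ into a statement about the sparse-ruler function $\mathrm{M}_1$ and then exhibit a length at which $\mathrm{M}_1$ actually \emph{decreases}. By Corollary~\ref{optimal corresponding} the infinite-alphabet value is known exactly, $\mathrm{HB}_\infty(m)=m-\mathrm{M}_1(m-1)$, and for every alphabet size we have the elementary sandwich $\mathrm{HB}_4(m)\le \mathrm{HB}_k(m)\le \mathrm{HB}_\infty(m)$ for $k\ge 4$; the left inequality holds because an unbordered partial word over a $4$-letter alphabet is also one over any larger alphabet, and the right inequality is immediate. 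I would pick $n$ so that, first, $\mathrm{M}_1(n)<\mathrm{M}_1(n-1)$, and second, the optimal ruler of length $n$ can be realised as the domain of an unbordered partial word over a $4$-letter alphabet, so that $\mathrm{HB}_4(n+1)=\mathrm{HB}_\infty(n+1)=(n+1)-\mathrm{M}_1(n)$.

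Granting such an $n$, the estimate is immediate. For every $k\ge 4$, combining the lower bound $\mathrm{HB}_k(n+1)\ge \mathrm{HB}_4(n+1)=(n+1)-\mathrm{M}_1(n)$ with the upper bound $\mathrm{HB}_k(n)\le \mathrm{HB}_\infty(n)=n-\mathrm{M}_1(n-1)$ yields $\mathrm{HB}_k(n+1)-\mathrm{HB}_k(n)\ge 1+\bigl(\mathrm{M}_1(n-1)-\mathrm{M}_1(n)\bigr)\ge 2$, the last step using that the anomaly forces $\mathrm{M}_1(n-1)-\mathrm{M}_1(n)\ge 1$. Since $2>1$, this is exactly the assertion of the corollary. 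Note that nothing about $\mathrm{HB}_k(n)$ beyond its trivial upper bound is used: all the content sits in the non-monotonicity of $\mathrm{M}_1$ together with securing the value of $\mathrm{HB}_4(n+1)$.

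It remains to produce the length $n$. Here I would invoke the known non-monotonicity of the minimal sparse-ruler function: there are lengths at which a complete ruler of length $n$ needs strictly fewer marks than the best complete ruler of length $n-1$, a documented feature of the restricted-difference-base tables and verifiable by direct computation. Concretely I would display optimal rulers $R'$ of length $n-1$ and $R$ of length $n$ with $\abs{R}<\abs{R'}$, quoting $\mathrm{M}_1(n-1)=\abs{R'}$ and $\mathrm{M}_1(n)=\abs{R}$ from these tables, and then (via the construction in Theorem~\ref{correspondense theorem}) exhibit an explicit assignment of letters from $\{1,2,3,4\}$ to the marks of $R$, with holes elsewhere, producing an unbordered partial word of length $n+1$.

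The main obstacle is precisely this last realisability step. Having $R$ guarantees, for each distance $d\in\{1,\dots,n\}$, \emph{some} pair of marks at distance $d$; but to keep the word unbordered over only four letters one must ensure that for every such $d$ at least one realising pair is \emph{bichromatic}, since a border of complementary length survives exactly when every pair at the corresponding distance is monochromatic. Checking that a $4$-colouring meeting this condition exists for the chosen $R$ is the crux, and it is exactly the phenomenon behind the paper's remark that an alphabet larger than $4$ never seemed necessary; for a single fixed example it can be verified directly, or read off from the Wichmann construction developed later in this section, which already yields optimal unbordered words over a $4$-letter alphabet.
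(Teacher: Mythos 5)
Your proposal is correct, but its bookkeeping differs from the paper's in a way worth noting. The paper never pinpoints a single length at which $\mathrm{M}_1$ drops: it quotes only $\mathrm{M}_1(135)=21$ and $\mathrm{M}_1(138)=20$ from the OEIS tables, deduces $\mathrm{HB}_k(136)\le \mathrm{HB}_\infty(136)=115$ via Corollary~\ref{optimal corresponding} and $\mathrm{HB}_k(139)\ge \mathrm{HB}_4(139)\ge 119$ via an explicit four-letter word of length $139$, and then telescopes: if $\mathrm{HB}_k(n+1)\le \mathrm{HB}_k(n)+1$ held throughout, then $\mathrm{HB}_k(139)\le \mathrm{HB}_k(136)+3\le 118<119$, so the inequality fails at some \emph{unspecified} $n\in\{136,137,138\}$, which suffices for the ``for some $n$'' statement. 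Your version instead requires a located single-step anomaly $\mathrm{M}_1(n)<\mathrm{M}_1(n-1)$ \emph{together with} the exact evaluation $\mathrm{HB}_4(n+1)=(n+1)-\mathrm{M}_1(n)$, i.e.\ a four-letter realisation of an optimal ruler precisely at the drop. This program does go through: the drop occurs at $n=138$ (the tables give $\mathrm{M}_1(136)=\mathrm{M}_1(137)=21$), and the optimal $20$-mark ruler of length $138$ is the Wichmann ruler $w_1(3,5)$, realised over four letters by $W(3,5)=ab^3\hole^3(a\hole^6)^3b(\hole^{14}c)^5(\hole^7d)^4b^3$ --- exactly the second word displayed in the paper's proof and an instance of Theorem~\ref{wichmannword4}, just as you anticipated in your ``crux'' paragraph; your criterion that every distance must admit a bichromatic pair of marks is also the correct reformulation of unborderedness for a word whose domain is a complete ruler. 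What your route buys is more information (it locates where monotonicity fails, and shows the failure is a genuine single-step jump of size $\ge 2$); what it costs is more data: you need the values $\mathrm{M}_1(136),\mathrm{M}_1(137)$, and the argument would stall if the drop sat at a length whose optimal rulers admitted no four-letter colouring, whereas the paper's telescoping needs only the two bracketing values of $\mathrm{M}_1$ and lets the four-letter witness sit at whatever length is convenient. Note also that the \emph{existence} half of your program is automatic by pigeonhole from the paper's two quoted values (an integer-valued decrease over three steps forces a single-step decrease somewhere in $\{136,137,138\}$); only the realisability half genuinely leans on the Wichmann construction, so your sketch is complete once those concrete values and the word $W(3,5)$ are filled in.
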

\begin{proof}
    From known values of the function $\mathrm{M}_1(n)$ (OEIS \cite{ruler-sequence}) we see that $\mathrm{M}(135)=21$ and $\mathrm{M}_1(138)=20$. Then by Corollary $\ref{optimal corresponding}$ we have 
    \begin{align*}
        \mathrm{HB}_{21}(136) &= 136 -21 = 115 \text{ and }\\
        \mathrm{HB}_{21}(139) &= 139 -20 = 119.
    \end{align*}
    The following unbordered words prove that these values are already obtained in four letter alphabet:
    $$a^4b^3\hole^{58}a(\hole^2 c)^3 (\hole^6 d)^7 (\hole^3 b)^3 $$ and $$ab^3\hole^3 (a\hole^6)^3 b (\hole^{14} c)^5 (\hole^7 d)^4 b^3.$$
    
    So $\mathrm{HB}_{4}(139)>\mathrm{HB}_{4}(136)+3$ which is equivalent to the claim. 
    
\end{proof}

There is currently no intuitive nor rigorous explanation why $\mathrm{M}_1$ is non-increasing and thus these particular values of $\mathrm{HB}_4$ are bit of a mystery. The result $\mathrm{HB}_k(n+1) \leq \mathrm{HB}_k(n)+1$ holds for $k=2$ due to the closed form of the function $\mathrm{HB}_2$ \cite{blanchet2009many}. This is an open problem for $k=3$.

\subsection{Wichmann-rulers and Wichmann-words}
The best upper bound for the function $\mathrm{M}_1$ is by Ed Pegg Jr. (\cite{hittingallthemarks} 2020) with construction for large values of $n$ and by computer search for ``small'' values of $n$ ($n<257 992$). The construction for large values depends on so called Wichmann rulers, constructed by Wichmann in 1963 \cite{Wichmann}. In this chapter, as the references are lacking, we give a self-contained presentation of Wichmann ruler construction. We then show how to turn this construction to unbordered partial words over the minimal alphabet size.

If $R = \{a_0, \dots, a_k\}$ is a sparse ruler, we say that
\begin{math}
    D = (a_1 - a_0, \dots, a_k - a_{k - 1})
\end{math}
is a \emph{difference representation} of $R$.
If $a_0 < \dots < a_k$, then the elements of $D$ are all positive.
However, this is not required,
which means that a ruler has many different difference representations.
On the other hand, each difference representation specifies a unique ruler since $0$ is the smallest element in rulers.

\begin{definition}
    Wichmann ruler $w_1(r,s)$ is the ruler defined by difference representation
    $$(1^r, r+1, (2r+1)^{r}, (4r+3)^{s}, (2r+2)^{r+1},1^r) $$ or equivalently 
\begin{equation} \label{eq:wichmann2}
    ((-1)^r, (2r + 1)^{r + 1}, (4r + 3)^s, (2r + 2)^{r + 1}, 1^r),
\end{equation} where $a^b$ means that the value $a$ is repeated $b$ times.
\end{definition}

For example, the ruler $w_1(1,1)$ has the difference representation $$(1,2,3,7,4,4,1)$$ and thus $w_1(1,1)=\{0,1,3,6,13,17,21,22 \}$. The difference representation $\eqref{eq:wichmann2}$ might better show the symmetry of the construction. For $w_1(1,1)$, it would be $(-1,3,3,7,4,4,1).$

For most lengths, there is no Wichmann ruler, so we will also need extended Wichmann rulers.
\begin{definition}
    Extended Wichmann ruler $w_1(r,s,i,j)$ is the ruler defined by the difference representation 
    $$(1^r, r+1, (2r+1)^{r}, (4r+3)^{s}, (2r+2)^{r+1},1^r,(r+1)^i,j) $$ or equivalently
    $$ ((-1)^r, (2r + 1)^{r + 1}, (4r + 3)^s, (2r + 2)^{r + 1}, 1^r, (r+1)^i,j) $$ where $j\leq r+1$ and $a^b$ means that the value $a$ is repeated $b$ times.
\end{definition}
Extended Wichmann rulers are complete. Wichmann originally omitted the proof due to its tedious nature, so we include it to cut loose ends in our theory later. Reader can easily skip this proof and accept that the extended Wichmann rulers are complete.

The proof is greatly improved version of the one in second writer's master's thesis \cite{Gradu} (in Finnish).

\begin{theorem}
\label{complete rulers}
    For all $r,s,i \in \N$ and $j\leq r+1$ the rulers $w_1(r,s,i,j)$ are complete.
\end{theorem}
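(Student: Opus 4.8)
The plan rests on the elementary observation that a ruler measures a distance $d$ precisely when $d$ is the sum of a block of consecutive entries of its difference representation: if $a_p<a_q$ are marks then $a_q-a_p=d_{p+1}+\cdots+d_q$ telescopes over the consecutive differences $d_t$. Hence $w_1(r,s,i,j)$ is complete if and only if every $d\in\{0,1,\dots,n\}$ occurs as such a contiguous block sum of
\[
D=(1^r,\,r+1,\,(2r+1)^{r},\,(4r+3)^{s},\,(2r+2)^{r+1},\,1^r,\,(r+1)^{i},\,j),
\]
whose total is $n=(r+1+s)(4r+3)+i(r+1)+r+j$. I would first record the two identities that drive the whole argument, namely $4r+3=(2r+1)+(2r+2)$ and $(2r+2)-(2r+1)=1$, and note that the first three blocks produce the \emph{left cluster} $L=\{0,1,\dots,r\}\cup\{m(2r+1):1\le m\le r+1\}$, while the last four blocks produce a mirror-image \emph{right cluster} of marks spaced first by $2r+2$, then by $1$, then by $r+1$.

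Next I would classify windows by the number $k\in\{0,1,\dots,s\}$ of the equal long jumps $(4r+3)$ that they traverse. For $k=0$ the window stays inside one cluster; the left cluster contributes $[0,r]$ together with the intervals $[m(2r+1)-r,\,m(2r+1)]$, and the right cluster contributes intervals of the form $[m'(2r+2),\,m'(2r+2)+r]$. For $k\ge 1$ the window has an endpoint in the $(4r+3)$-run, so its sum is $k(4r+3)$ plus a fine offset drawn from the clusters. The role of the identity $4r+3=(2r+1)+(2r+2)$ is to align these regimes: the gaps left by the step-$(2r+1)$ coverage of the left cluster (gaps of length $r$) and by the step-$(2r+2)$ coverage of the right cluster (gaps of length $r+1$) are exactly the integers picked up either by the other cluster or by a window that uses one more long jump, so that the regimes for $k$ and $k+1$ have complementary gaps and their union is gap-free.

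The largest distances, those reaching into the tail $(r+1)^{i},\,j$, are handled separately and cleanly. Here the relevant marks are spaced by $r+1$, and for a fixed far-right mark $\rho$ the choices $\ell\in\{0,1,\dots,r\}$ of left endpoint give the whole interval $[\rho-r,\rho]$ of length $r+1$; consecutive far-right marks, being $r+1$ apart, then yield abutting intervals with no gap between them. The condition $j\le r+1$ is precisely what guarantees that the final short step does not break this chain, so coverage extends all the way up to $n$.

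I expect the crux, and the part Wichmann chose to omit, to be the verification that these regimes interlock with no integer missed at the seams between them. The delicacy is entirely numerical and stems from the two spacings $2r+1$ and $2r+2$ differing by only $1$: one must check, uniformly in $r$, $s$, $i$ and $j$, that every gap of one spacing family is filled by the other family or by the appropriately long-jump-shifted cross windows. Organizing this bookkeeping so that the finitely many regimes provably tile $\{0,\dots,n\}$ — rather than merely doing so in examples — is the main obstacle, and I would structure the proof so that each gap is matched to an explicit covering window before assembling the intervals.
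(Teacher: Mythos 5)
Your skeleton is sound and matches the paper's starting point: the consecutive-block-sum characterization of measurable distances, the two driving identities $4r+3=(2r+1)+(2r+2)$ and $(2r+2)-(2r+1)=1$, the cluster interval families (your formulas $[m(2r+1)-r,\,m(2r+1)]$ and $[m'(2r+2),\,m'(2r+2)+r]$ are correct), and the tail argument, where $j\le r+1$ is indeed exactly what keeps the chain of abutting intervals $[\rho-r,\rho]$ unbroken up to the full length. But the proposal stops where the theorem begins: the verification that the regimes ``interlock with no integer missed at the seams'' \emph{is} the content of the statement --- it is precisely the part Wichmann omitted and the part the paper supplies --- and you explicitly defer it as ``the main obstacle.'' As written, this is a plan, not a proof. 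Moreover, the deferred bookkeeping is not routine in the shape you set up: your classification by the number $k$ of traversed $(4r+3)$-jumps misstates the cases (for $s=0$ a $k=0$ window can span both clusters, and a window with $k=s\ge1$ can have both endpoints in the clusters rather than in the run), and since only $r$ or $r+1$ copies of each spacing are available, windows near the seams exhaust a run and must spill into adjacent blocks ($j<s$ versus $j=s$, sums reaching the trailing $1^r$) --- exactly the boundary cases where an unstructured tiling argument tends to leak.

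The paper tames this bookkeeping with a device absent from your proposal: it passes to the signed difference representation $((-1)^r,\,(2r+1)^{r+1},\,(4r+3)^s,\,(2r+2)^{r+1},\,1^r)$ and proves completeness by the induction ``if $x$ is a block sum, so is $x+1$.'' A block beginning with $-1$ loses it, a block followed by $1$ gains it, and what survives is a finite list of eight explicit identities with checked parameter ranges, e.g.\ $i(2r+1)+j(4r+3)+1=(i-2)(2r+1)+(j+1)(4r+3)$ for $i\ge2$, $j<s$, and $i(4r+3)+r(2r+2)+1=r(-1)+(r+1)(2r+1)+i(4r+3)$. (It also first reduces $w_1(r,s,i,j)$ to $w_1(r,s)$ via your tail argument, so that part of your plan is vindicated.) To complete your interval-tiling version instead, you would have to write down every regime's interval family with exact endpoint formulas in $r,s,k$ and prove the gaps complementary uniformly in all parameters --- that is, carry out in interval language precisely the case analysis you postponed.
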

\begin{proof}
    Firstly we note that if $w_1(r,s)$ is complete, then is also $w_1(r,s,i,j)$. This is because the ruler $w_1(r,s,i,j)$ can measure the distances that are less or equal to the length of the ruler $w_1(r,s)$ by completeness of $w_1(r,s)$. Then for the longer lengths of the type $n+l$, where $n$ is the length of of the ruler $w_1(r,s)$ and $l\leq (r+1)i+j$, the ruler $w_1(r,s,i,j)$ has the marks $\{0,1,\ldots, r\}$ and marks $\{n+t(r+1) \ \vert \ t\leq i \}$ and the mark $n+i(r+1)+j$.

    What is left to prove is that $w_1(r,s)$ is complete for all parameter values. So we need to prove that $w_1(r, s)$ can measure every distance from $1$ to its length,
    which is $(4r + 3)(r + s + 1) + r$.
    It is sufficient to prove that for all $x \leq (4r + 3)(r + s + 1) + r - 1$,
    if $x$ can be measured, then so can $x + 1$.
    
    For rest of the proof, we use the fact that a sparse ruler with a difference representation $(b_1, \dots, b_k)$ can measure a distance $x \geq 1$ if and only if there exists $i \leq j$ such that 
\begin{math}
        x = |b_i + \dots +  b_j|.
\end{math}
    So in our case $x$ can be measured if and only if it is a sum of consecutive elements of \eqref{eq:wichmann2}.
    If the first of these elements is $-1$,
    then we can leave it out to get a representation for $x + 1$.
    Similarly, if these elements are followed by a $1$,
    then we can add it to the end to get a representation for $x + 1$.
    We still have to consider sums of consecutive elements of the part
    \begin{equation*}
        ((2r + 1)^{r + 1}, (4r + 3)^s, (2r + 2)^r)
    \end{equation*}
    and sums that reach all the way to the end of \eqref{eq:wichmann2} and end in $r$ copies of 1.
    For the former sums, we have the following cases:
    \begin{itemize}
        \item[$\bullet$]
        If $x = i (2r + 1) + j (4r + 3)$, where $i \geq 2$ and $j < s$,\\
        then $x + 1 = (i - 2) (2r + 1) + (j + 1) (4r + 3)$.
        \item[$\bullet$]
        If $x = 2r + 1 + j (4r + 3)$,\\
        then $x + 1 = j (4r + 3) + 2r + 2$.
        \item[$\bullet$]
        If $x = i (2r + 1) + s (4r + 3) + j (2r + 2)$, where $i \geq 1$ and $j \leq r$,\\
        then $x + 1 = (i - 1) (2r + 1) + s (4r + 3) + (j + 1) (2r + 2)$.
        \item[$\bullet$]
        If $x = i (4r + 3) + j (2r + 2)$, where $i \geq 1$ and $j \leq r - 1$,\\
        then $x + 1 = (i - 1) (4r + 3) + (j + 2) (2r + 2)$.
        \item[$\bullet$]
        If $x = i (4r + 3) + r (2r + 2)$,\\
        then $x + 1 = r (-1) + (r + 1) (2r + 1) + i (4r + 3)$.
    \end{itemize}
    For the latter sums, we have the following cases:
    \begin{itemize}
        \item[$\bullet$]
        If $x = i (2r + 1) + s (4r + 3) + (r + 1) (2r + 2) + r$, where $i \leq r$,\\
        then $x + 1 = i (-1) + (r + 1) (2r + 1) + s (4r + 3) + (i + 1) (2r + 2)$.
        \item[$\bullet$]
        If $x = i (4r + 3) + (r + 1) (2r + 2) + r$, where $i \leq s - 1$,\\
        then $x + 1 = r (2r + 1) + (i + 1) (4r + 3)$.
        \item[$\bullet$]
        If $x = i (2r + 2) + r$, where $i \leq r$,\\
        then $x + 1 = (r - i) (-1) + (i + 1) (2r + 1)$.
    \end{itemize}
    This completes the proof.
\end{proof}

We can now present the proof for asymptotic growth on the number of marks that Wichmann-rulers can achieve. Proof is the same as in Wichmann's original paper, but is included so a reader interested in unbordered partial words can follow the calculations later with words in mind. The constant term in the bound could be lowered according to Pegg's result \cite{hittingallthemarks}, but we mostly care about the asymptotic behaviour.

\begin{theorem}[\cite{Wichmann}] \label{useable upper bound} For all $n\geq 213$ there exists an extended Wichmann-ruler $w_1(r,s,i,j)$ of length $n$ so that 
    $$\abs{w_1(r,s,i,j)} \leq  \sqrt{3n} + 4.$$
    \end{theorem}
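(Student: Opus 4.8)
The plan is to carry out Wichmann's asymptotic count explicitly in terms of the parameters $r$ and $s$, and to use the extension parameters $i,j$ only for fine-tuning the length. First I would record the two quantities attached to the base ruler $w_1(r,s)$: summing its difference representation gives length $(4r+3)(r+s+1)+r$ (exactly the value appearing in Theorem~\ref{complete rulers}), while counting the blocks $1^r, r+1, (2r+1)^r, (4r+3)^s, (2r+2)^{r+1}, 1^r$ gives $4r+s+3$ marks. The extension appends $(r+1)^i$ and a final $j$, adding $i(r+1)+j$ to the length and at most $i+1$ marks. Thus the whole task reduces to the following: given $n$, pick integers $r,s,i,j$ realizing length exactly $n$ while keeping the mark count below $\sqrt{3n}+4$.

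Second, I would locate the optimal order of magnitude of $r$ by a continuous computation. Writing $m=4r+s$ (so the base ruler has $m+3$ marks) and substituting $s=m-4r$ into the length shows that, for fixed $m$, the base length is a downward parabola in $r$ maximized at $r=(m-1)/6$, with maximal value $(m-1)^2/3+3m+3$. Inverting this relation, a ruler of $M$ marks reaches length up to $n$ with $3n=(M-1)(M+2)$, i.e.\ $M=\tfrac12(\sqrt{12n+9}-1)\approx\sqrt{3n}$. This is the heart of the estimate and already explains the $\sqrt{3n}$ term; it also tells me to take $r$ equal to the nearest integer to $\sqrt{n/12}$.

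Third, with this $r$ fixed I would let $s$ be the largest integer with base length at most $n$, so that the residual gap $g=n-\bigl((4r+3)(r+s+1)+r\bigr)$ satisfies $0\le g\le 4r+2$. Any such $g$ can be written as $g=i(r+1)+j$ with $0\le j\le r+1$ and $i\le 3$ (ordinary division by $r+1$), and this is exactly the data of an extended Wichmann ruler $w_1(r,s,i,j)$ of length precisely $n$, whose completeness is guaranteed by Theorem~\ref{complete rulers}. The mark count is then at most $4r+s+4+i$, and I would bound $s$ from above using the defining inequality for the gap together with the near-optimal choice of $r$.

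The main obstacle is pinning the additive constant to exactly $4$. The continuous optimum only yields $\approx\sqrt{3n}$, but each discretization step costs an $O(1)$ error: replacing the real optimum $\sqrt{n/12}$ by an integer $r$, replacing $s$ by its floor, and paying up to four extra marks for the extension. A naive sum of these errors overshoots, landing nearer $+6$ or $+7$, the bad case being $g$ close to $4r+2$, which forces $i=3$. To recover the stated constant I expect to have to exploit that the mark count is an integer, so the target is effectively $\lfloor\sqrt{3n}\rfloor+4$, and to choose $r$ among the few candidates nearest $\sqrt{n/12}$ so as to keep $r$ near-optimal while also keeping the gap $g$ small, trading a tiny loss in the $\sqrt{3n}$ term against a saving in extension marks. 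Finally I would use the hypothesis $n\ge 213$ both to ensure the chosen parameters are admissible ($r\ge1$, $s\ge0$, $j\le r+1$) and to rule out the finitely many small-$n$ cases where the asymptotic estimate has not yet taken hold.
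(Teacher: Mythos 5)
Your parameter choices are exactly the paper's ($r$ near $\sqrt{n/12}$, hence $s$ near $2r$, with the extension covering a residual gap of at most $4r+2$ using at most four extra marks), and your continuous optimization correctly identifies where the constant $\sqrt{3n}$ comes from. But the final step is a genuine gap, and you say so yourself: your additive accounting of discretization errors lands at $+6$ or $+7$, and your proposed rescue (exploiting integrality of the mark count and trying several candidate values of $r$) is left as an expectation, not an argument. The missing idea is that no rescue is needed, because the inequality $m_{r,s}\leq\sqrt{3\,l_{r,s}}$ holds \emph{exactly}, not just asymptotically, throughout the relevant parameter range. Writing $s=2r+e$ with $e\in[-2,4]$ and $r\geq 4$, one computes
$$3\,l_{r,s}=36r^2+12(e+3)r+6r+9e+9=(6r+e+3)^2+\bigl(6r+3e-e^2\bigr),$$
and the slack $6r+3e-e^2$ is nonnegative for all $e\in[-2,4]$ once $r\geq 2$; hence $m_{r,s}=4r+s+3=6r+e+3\leq\sqrt{3\,l_{r,s}}\leq\sqrt{3n}$ with \emph{zero} additive loss. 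The base ruler thus costs at most $\sqrt{3n}$ marks, and the entire constant $+4$ is paid by the at most four extension marks ($i\leq 3$ plus the final mark for $j$), since increasing $s$ by one lengthens the ruler by $4r+3<4(r+1)$.

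Your sketch also leaves unverified that the greedy choice of $s$ (largest with base length at most $n$) actually lands in the good range $[2r-2,2r+4]$ for your rounded $r$. The paper sidesteps this with a tiling argument: the identity $l_{r,2r+4}=l_{r+1,2(r+1)-2}$ shows that the intervals $[l_{r,2r-2},l_{r,2r+4}]$ for $r=4,5,\dots$ cover $[213,\infty)$ without gaps (and $l_{4,6}=213$ is precisely where the hypothesis $n\geq 213$ comes from), so for every such $n$ there exist $r\geq 4$ and $e\in[-2,4]$ with $l_{r,2r+e}\leq n\leq l_{r,2r+e+1}$. With the exact inequality above, your third paragraph then finishes the proof essentially verbatim; without it, the proposal does not reach the stated constant.
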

    
    \begin{proof}
    The lenght of a Wichmann-ruler $w_1(r,s)=w_1(r,s,0,0)$ is $l_{r,s}=4r(r+s+2) + 3s + 3$ and the number of marks is $m_{r,s}=4r + s + 3$. Let $r\geq 4$ and $s=2r+e \in [2r-2,2r+4]$. Then 
    \begin{align*}
         \sqrt{3l_{r,s}} =&\sqrt{12r(r+s+2) + 9s + 9}  \\
        =& \sqrt{36r^2 + 2(e+3)6r + 6r + 9e +9 }  \\
        \geq& \sqrt{36r^2 + 2(e+3)6r + (e+3)^2 }  \\
        =&6r + e + 3 = m_{r,s}.
    \end{align*}
    
    An increase of parameter $s$ by one will increase the length of Wichmann-ruler by $4r+3 < 4(r+1)$. So for all $n \in [l_{r,2r+e}, l_{r,2r+e+1}]$ we can extend the Wichmann-ruler $w_1(r,2r+e)$ with at most $4$ marks to obtain a complete ruler of length $n$. This extended Wichmann-ruler will have at most $\sqrt{3l_{r,2r+e}}+4 \leq \sqrt{3n}+4$ marks. To finish the proof a simple calculation shows 
    us that $l_{r,2r+4} = l_{r+1,2(r+1)-2}$, meaning that all $n \geq l_{4,6} = 213$ are between these good Wichmann-rulers and thus there exists that length extended Wichmann-ruler with the claimed amount of marks.
\end{proof}
We now have (mostly) proven
\begin{theorem}[\cite{Wichmann}]
For all $n$,
    $$\mathrm{M}_1(n) \leq  \sqrt{3n} + 4.$$
\end{theorem}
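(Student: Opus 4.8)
The plan is to split the claim into the asymptotic range already handled by the preceding work and a finite range of small lengths. For $n \geq 213$, Theorem \ref{useable upper bound} furnishes an extended Wichmann ruler $w_1(r,s,i,j)$ of length exactly $n$ satisfying $\abs{w_1(r,s,i,j)} \leq \sqrt{3n}+4$, and this ruler is complete by Theorem \ref{complete rulers}. Since $\mathrm{M}_1(n)$ is by definition the least number of marks over all complete rulers of length $n$, we obtain directly $\mathrm{M}_1(n) \leq \abs{w_1(r,s,i,j)} \leq \sqrt{3n}+4$, which is the asserted bound on that range.

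It remains to treat the finitely many lengths $n < 213$. Here the additive constant $+4$ leaves a great deal of slack, so crude rulers suffice for the very smallest cases. The dense ruler $\{0,1,\ldots,n\}$ is trivially complete and has $n+1$ marks, and one checks that $n+1 \leq \sqrt{3n}+4$ is equivalent to $n-3 \leq \sqrt{3n}$, which holds for all $n \leq 7$. For the remaining lengths $8 \leq n \leq 212$ I would invoke the tabulated optimal values of $\mathrm{M}_1$ (for instance the OEIS entry \cite{ruler-sequence}): comparing each listed value against $\sqrt{3n}+4$ confirms the inequality throughout this range, completing the proof.

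The only real obstacle is structural rather than computational. Theorem \ref{useable upper bound} is vacuous below $n = 213$ because the good Wichmann families with $r \geq 4$ and $s \in [2r-2,\,2r+4]$ begin only at length $l_{4,6} = 213$, so small lengths lie outside the reach of the construction and cannot be absorbed into the asymptotic argument. They must therefore be dispatched by a separate finite verification, which is routine given the generous constant and the availability of tabulated optimal values; this is exactly the gap signalled by the word ``mostly'' in the statement preceding the theorem.
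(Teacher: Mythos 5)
Your proposal is correct and follows essentially the same route as the paper: the case $n \geq 213$ via Theorems \ref{complete rulers} and \ref{useable upper bound}, and the finitely many cases $n < 213$ by checking against the tabulated optimal values of $\mathrm{M}_1$ from OEIS \cite{ruler-sequence}. Your extra remark handling $n \leq 7$ with the dense ruler $\{0,1,\ldots,n\}$ is a harmless elaboration of the same finite verification.
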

\begin{proof}
    For $n\geq 213$ this follows from Theorems $\ref{complete rulers}$ and $\ref{useable upper bound}$. For $n<213$, the constant term $+4$ is so generous that the claim can easily be checked by computer search, or from known values of $\mathrm{M}_1$ in OEIS \cite{ruler-sequence}.
\end{proof}

Now we can consider how to turn Wichmann-rulers into unbordered partial words. Theorem $\ref{wichmannword4}$ gives the minimum alphabet size where this can be done. We call the constructed words $\textit{Wichmann words}$.

\begin{theorem} Partial words $$W(r,s)=ab^{r}\hole^r (a\hole^{2r})^{r}b (\hole^{4r+2}c)^{s} (\hole^{2r+1}d)^{r+1}b^r$$ and $$W(r,s,i,j)=ab^{r}\hole^r (a\hole^{2r})^{r}b (\hole^{4r+2}c)^{s} (\hole^{2r+1}d)^{r+1}b^r(\hole^r c)^i \hole^{j-1} c$$ are unbordered for all parameter values $r,s,i\in \N$ and $1\leq j\leq r+1$.
\label{wichmannword4}
\end{theorem}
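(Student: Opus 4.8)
The plan is to show that the partial word $W(r,s)$ (and its extended version) is unbordered by reducing the claim to the completeness of the corresponding Wichmann ruler, which is already available from Theorem~\ref{complete rulers}. The key observation is that the domain $\mathrm{D}(W(r,s))$ should be exactly the Wichmann ruler $w_1(r,s)$, and the converse direction of Theorem~\ref{correspondense theorem} tells us that a partial word whose domain is a complete sparse ruler is unbordered. So the bulk of the work is not a direct border analysis at all, but rather verifying that the positions of the letters $a,b,c,d$ in $W(r,s)$ land exactly on the marks of $w_1(r,s)$.

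First I would compute the difference representation of the letter positions in $W(r,s)$. Reading the word $ab^{r}\hole^r (a\hole^{2r})^{r}b (\hole^{4r+2}c)^{s} (\hole^{2r+1}d)^{r+1}b^r$ from left to right and recording the gaps between consecutive non-hole symbols, I expect to obtain the sequence $(1^r, r+1, (2r+1)^{r}, (4r+3)^{s}, (2r+2)^{r+1}, 1^r)$, which is precisely the difference representation defining $w_1(r,s)$. For instance, the initial block $ab^r$ contributes $r$ gaps of size $1$; the transition from the last $b$ across $\hole^r$ to the first $a$ of $(a\hole^{2r})^r$ contributes the gap $r+1$; each full period $a\hole^{2r}$ contributes a gap of $2r+1$; and so on through the $(4r+3)$ and $(2r+2)$ blocks, ending with $r$ unit gaps in the final $b^r$. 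Once this identification is in place, the converse part of Theorem~\ref{correspondense theorem} combined with Theorem~\ref{complete rulers} gives that $W(r,s)$ is unbordered, because a word whose domain is a complete ruler of length $|W|-1$ cannot have a border of any length.

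For the extended word $W(r,s,i,j)$, the appended tail $(\hole^r c)^i \hole^{j-1} c$ should contribute exactly the extra differences $((r+1)^i, j)$, matching the difference representation of the extended Wichmann ruler $w_1(r,s,i,j)$; the constraint $1\leq j\leq r+1$ ensures both that the final gap $j$ is well-defined (we need $j\geq 1$ so there is at least one hole-then-letter or a single trailing letter) and that it satisfies the hypothesis $j\leq r+1$ of Theorem~\ref{complete rulers}. The same argument as above then applies. I would also note that $W(r,s,i,j)$ uses only the four letters $a,b,c,d$, which is the point of the theorem: the Wichmann construction can be realized over a fixed four-letter alphabet rather than needing a distinct letter per mark.

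The main obstacle I anticipate is purely bookkeeping: carefully checking the gap between blocks where the letters change, since an off-by-one error in counting holes (e.g.\ whether a block contributes $2r+1$ or $2r+2$, or whether the $r+1$ transition gap is correct) would break the exact correspondence with the difference representation. In particular the boundary gaps---from the $ab^r$ prefix into the $(a\hole^{2r})^r$ block, and from the $(\hole^{2r+1}d)^{r+1}$ block into the final $b^r$---need explicit verification, as does the very first difference of the extended tail (the transition from the trailing $b^r$ across $\hole^r$ into the first $c$). Apart from this indexing care, once the difference representation is confirmed to coincide with that of $w_1(r,s,i,j)$, the unborderedness is immediate from the already-established machinery and requires no independent combinatorial argument about prefixes and suffixes.
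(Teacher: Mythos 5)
Your proposal has a genuine gap, and it sits exactly at the step you treat as immediate. The converse direction of Theorem \ref{correspondense theorem} is an \emph{existential} statement: for every complete sparse ruler $R$ there exists \emph{some} unbordered partial word with domain $R$, and its proof manufactures that word by giving every mark its own distinct letter, so that any position where a prefix and suffix both carry letters automatically carries two \emph{different} letters. It does not say that every partial word whose domain is a complete ruler is unbordered, and that statement is false: the full word $aba$ has domain $\{0,1,2\}$, a complete ruler of length $2$, yet it has the border $(a,a)$. So your plan --- verify $\mathrm{D}(W(r,s)) = w_1(r,s)$, cite Theorem \ref{complete rulers}, and conclude --- only establishes the \emph{necessary} condition coming from the forward direction of Theorem \ref{correspondense theorem}. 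The four-letter alphabet, which you correctly identify as the whole point of the theorem, is precisely what invalidates your shortcut: with letters reused, a prefix and suffix can overlap letter-on-letter at every measuring position and still be compatible because the letters agree. (Your gap bookkeeping itself is correct: the differences of $W(r,s)$ are indeed $(1^r, r+1, (2r+1)^r, (4r+3)^s, (2r+2)^{r+1}, 1^r)$ and the tail contributes $((r+1)^i, j)$. That part matches the paper, but it is the easy half.)

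What is missing is the actual content of the paper's proof: one must show that every distance $n-l$ can be measured by a pair of marks carrying \emph{different} letters. The paper does this by labelling the blocks of the difference representation as areas $A,\dots,F$, observing that measurements within a single equal-gap area only need two letters in that area, that areas with disjoint alphabets handle all cross-area distances between them, and then explicitly re-measuring the problematic pairs of areas that share letters: distances from the initial $a$ in $A$ to the $a$'s in $C$ are multiples of $2r+1$ and are instead measured within $C$ (ending at its final $b$); the range $[r(2r+2)+1, r(2r+2)+r]$ between $b$'s in $A$ and the $b$ ending $C$ is re-measured from the $d$'s of area $E$ to the $b$'s of area $F$; distances from $A$ to $F$ are taken using the $a$ in $A$ or the $d$ in $F$'s neighborhood so the shared letter $b$ is avoided; and the range between the last letter of $C$ and the end of $F$ is re-measured from $A$ to the second mark of $E$. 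Without these checks your argument cannot exclude, for example, a border of length $l$ in which every overlapping letter position pairs $a$ with $a$ or $b$ with $b$; completeness of the underlying ruler alone is silent about this. Since the completeness of $w_1(r,s,i,j)$ was already Theorem \ref{complete rulers}, your proposal in effect reduces the theorem to known results plus a false lemma, and the nontrivial verification that makes the four-letter realization work is absent.
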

    \begin{proof}
        In this proof, instead of directly showing there is no border, we equivalently show that the domain of these partial words are complete sparse rulers in such a way that the measuring can be done between marks that correspond to different letters in the word. The word is then unbordered similarly to the word in the proof of Theorem $\ref{correspondense theorem}$.
        
        For all $i$ and $j \leq r+1$ a partial word $W(r,s,i,j) = W(r,s) (\hole^r c)^i \hole^{j-1} c$. Then it is quite easy to see (following the first part of Theorem $\ref{complete rulers}$) that if $W(r,s)$ is unbordered, then so is $W(r,s,i,j)$. So we concentrate on $W(r,s)$ for the rest of the proof.

        $W(r,s)$ is constructed so that $\mathrm{D}(W(r,s)) = w_1(r,s)$. We label the defining vector of differences in Wichmann-rulers as follows: $$(1^r, r+1, (2r+1)^{r}, (4r+3)^{s}, (2r+2)^{r+1},1^r) = (A,B,C,D,E,F), $$ where $A=1^r, B=r+1, C=(2r+1)^r$ and so on. Now note that if a distance is measured using two marks within a single area, the measured distance is a multiple of that areas difference. Thus each area needs to use only at most 2 letters to be able to measure within itself. Then all other measuring the underlying Wichmann ruler does is with marks in 2 different areas. If the areas use disjoint alphabets, then those areas clearly are able to measure all the differences between them. Also note that area $B$ is fully covered by areas $A$ and $C$, so it does not need a separate check. Above observations take care of all other distances but the distances from area $A$ to areas $C$ and $F$ and from area $C$ to the area $F$. The fact that the word measures these has to be checked separately.
    
        The distance between the starting $a$ in the area $A$ to the other $a$ letters in the area $C$ is a multiple of $2r+1$. These distances can be measured within the area $C$.

        The distances between letters $b$ in the area $A$ and the letter $b$ in the area $C$ is the range $[r+1+r(2r+1),2r +r(2r+1)]=[r(2r+2)+1,r(2r+2)+r]$. This range can be also measured from the area $E$ to the the area $F$.
        
        The measurements from the area $A$ to the area $F$ can always be made using the letter $a$ from the area $A$ or by using the $d$ from the area $F$ so the common letter $b$ does not matter.
        
        Lastly, the distances between the last letter of the area $C$ and the ending letters of the area $F$ are the range $[(4r+3)s + (2r+2)(r+1)+1,(4r+3)s + (2r+2)(r+1)+r]$ which is equal to $[r+1+(2r+1)r+(4r+3)s + (2r+2),r+r+1+(2r+1)r+(4r+3)s + (2r+2)]$. This can be measured form the area $A$ to the second mark in the area $E$.

        Now this word can measure all the distances similarly to a Wichmann-ruler, so it is unbordered.

    \end{proof}

Figure $\ref{wichmannword22}$ shows an example of the construction given in Theorem $\ref{wichmannword4}$. We now get the new lower bound for $\mathrm{HB}_4$.
\begin{figure}
    \centering
    \includegraphics[width=\textwidth]{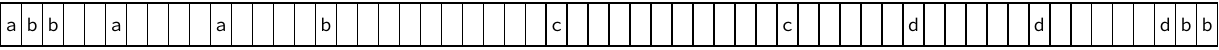}
    \caption{Word constructed in Theorem $\ref{wichmannword4}$ with parameters $r=s=2$. Empty space denotes a hole.}
    \label{wichmannword22}
\end{figure}

\begin{theorem} $\mathrm{HB}_4(n) \geq n-\sqrt{3n-3}-4$.
\end{theorem}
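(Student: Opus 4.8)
The plan is to read off the bound directly from the two facts already established about Wichmann rulers: that an extended Wichmann ruler of length $n-1$ has few marks (Theorem \ref{useable upper bound}), and that such a ruler can be realized as the domain of an unbordered partial word over a four-letter alphabet (Theorem \ref{wichmannword4}). The guiding observation is that the number of holes of a partial word equals its length minus its number of letters, which by the domain/ruler correspondence is its length minus the number of marks of the underlying ruler; hence a ruler with few marks yields a word with many holes.

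First I would fix the length bookkeeping coming from Theorem \ref{correspondense theorem}: a partial word of length $n$ has domain a ruler of length $n-1$, its letters are exactly the marks, and so its number of holes is $n$ minus the number of marks. Next I would apply Theorem \ref{useable upper bound} to the value $n-1$: for $n-1 \geq 213$ this produces an extended Wichmann ruler $w_1(r,s,i,j)$ of length $n-1$ with $\abs{w_1(r,s,i,j)} \leq \sqrt{3(n-1)}+4$. I would then invoke Theorem \ref{wichmannword4}, which guarantees that the associated Wichmann word $W(r,s,i,j)$, whose domain is exactly this ruler, is unbordered and uses only the four letters $a,b,c,d$. This word has length $n$ and exactly
$$ n - \abs{w_1(r,s,i,j)} \geq n - \sqrt{3(n-1)} - 4 = n - \sqrt{3n-3} - 4 $$
holes, which gives $\mathrm{HB}_4(n) \geq n - \sqrt{3n-3} - 4$ for every $n \geq 214$.

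It then remains to settle the finitely many lengths $n \leq 213$ that Theorem \ref{useable upper bound} does not reach. For $n \leq 8$ a direct computation shows that the right-hand side $n - \sqrt{3n-3} - 4$ is nonpositive, so the inequality holds trivially: an unbordered full word such as $ab^{n-1}$ already witnesses $\mathrm{HB}_4(n) \geq 0$. For the intermediate range $9 \leq n \leq 213$ I would close the gap by a finite check, comparing the tabulated values of $\mathrm{M}_1$ from OEIS \cite{ruler-sequence} (via Corollary \ref{optimal corresponding}) against the claimed bound and verifying in each case that four letters suffice; the generous additive constant $+4$ leaves ample room for this.

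Since the genuine work has already been carried out in Theorems \ref{complete rulers}, \ref{useable upper bound}, and \ref{wichmannword4}, the final statement is essentially an assembly step. The only points that demand care are the off-by-one shift between word length and ruler length and the matching of the extended-ruler parameters to the admissible range $1 \leq j \leq r+1$ required by Theorem \ref{wichmannword4}. I expect the main, though minor, obstacle to be the small-length regime $9 \leq n \leq 213$: the sufficiency of a four-letter alphabet there is not delivered by the asymptotic construction and must be confirmed separately rather than deduced from the Wichmann machinery.
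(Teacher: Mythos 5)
Your proposal is correct and follows essentially the same route as the paper: for $n > 213$ the paper likewise assembles Theorems \ref{wichmannword4} and \ref{useable upper bound} via the fact $\mathrm{D}(W(r,s,i,j))=w_1(r,s,i,j)$, including the same off-by-one shift between word length $n$ and ruler length $n-1$. The only divergence is in the finite range $n \leq 213$, where instead of your tabulated-$\mathrm{M}_1$-plus-verification check the paper exhibits an explicit simple witness, namely $a^{\lfloor \sqrt{n} \rfloor-1} b (\hole^{\lfloor \sqrt{n} \rfloor-1} c)^{t_1} \hole^{t_2}c$ with $t_1, t_2$ chosen to make the length $n$ and $t_2 \leq \lfloor \sqrt{n} \rfloor - 1$, which settles the small cases constructively (and over only three letters) without appeal to computer search.
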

\begin{proof}
    For $n>213$, this follows from the Theorems $\ref{wichmannword4}$ and $\ref{useable upper bound}$ and the fact that $\mathrm{D}(W(r,s,i,j))=w_1(r,s,i,j)$ and $\mathrm{D}(W(r,s))=w_1(r,s)$.
    
    For $n \leq 213$ the constant $-4$ is so generous that the bound can be confirmed by easier constructions, for example with 
    $$a^{\lfloor \sqrt{n} \rfloor-1} b (\hole^{\lfloor \sqrt{n} \rfloor-1} c)^{t_1} \hole^{t_2}c$$ where $t_1$ and $t_2$ are chosen so that the length of the word is $n$ and $t_2 \leq \lfloor \sqrt{n} \rfloor-1$.
\end{proof}

For many values of $n$, Wichmann rulers and thus Wichmann words achieve the optimal values for $\mathrm{M}_1(n-1)$ and $\mathrm{HB}_4(n)$. It is conjectured that in rulers with $m$ marks, $m>13$, Wichmann rulers achieve the longest length. 

\section{Taking these ideas into 2 dimensions}

Ideas presented generalize easily to higher dimensions. We briefly visit the world of 2D unbordered partial words and 2D sparse rulers. We ask similar optimization questions as in 1D, and we present some preliminary results.

\begin{definition} \label{new definition}
   A complete sparse ruler of length $n$ and height $m$ in $2$ dimensions (later just $(n,m)$-ruler) is a subset $R$ of $\{0,1,\ldots n-1 \} \times \{0,1,\ldots m-1 \}$ so that for all $(x,y) \in \Z^2$, if $\abs{x}\leq n-1 $ and $\abs{y} \leq m-1$, then there exist $r_1,r_2 \in R$ so that $r_1 - r_2 = (x,y)$.
\end{definition}

We can informally define two-dimensional partial words to be rectangles consisting of alphabet symbols. We then can define a notion of border of two dimensional partial word as a part of a two dimensional word that can overlap with itself. This is formally defined in \cite{Avoiding-overlaps-in-pictures}. The correspondence Theorem $\ref{correspondense theorem}$ would work also in 2-dimensional case with the same proof, so instead we can directly define 2-dimensional partial word to be unbordered if its domain is a two-dimensional ruler that can measure non-zero vectors so that no measurement has to be done between same letters.

\begin{definition}
    Two-dimensional partial word $w$ is unbordered if the domain $\mathrm{D}(w)$ is a complete two-dimensional ruler and for all non-zero vectors $v$ that the ruler $\mathrm{D}(w)$ can measure, there exists $r_1$ and $r_2$ in $\mathrm{D}(w)$ so that $w(r_1) \neq w(r_2)$ and $r_1 - r_2 =v$.
\end{definition}

Like in the one dimensional case, we define $\mathrm{M}_2(n,m)$ to be the minimal number of marks in $(n,m)$-ruler. Now the same game of finding good rulers and then turning them to unbordered partial words over some finite alphabet can be played in 2D.

It is easy to see that cartesian product of two 1D-rulers is a 2D-ruler. This gives us one way to construct 2D-rulers. However, as one might expect, this is not the optimal way. The asymptotic we get from cartesian product of two 1D rulers is $$\mathrm{M}_2 (n,m) \leq 3\sqrt{mn} + \mathrm{O}(\sqrt{n}+\sqrt{m})$$ (assuming the 1D rulers we currently know are close to optimal). This growth rate is achieved by many simpler constructions, for example U-shaped walls form a better ruler than this, see \cite{soveltajat2dviivoittimet} for that construction.
The following construction gives the best growth rate among the constructions we found. It is the 2D version of the simple square root construction from 1D as seen in figure $\ref{2Drulerpicture}$. We also include a basic lower bound for $\mathrm{M}_2(n,m)$.
\begin{theorem}
    \label{best construction}
    $$\sqrt{4mn - 2(m+n)+\frac{1}{4}}+\frac{1}{2}\leq \mathrm{M}_2(n,m)\leq 2\sqrt{2}\sqrt{mn} + \mathrm{O}(\sqrt{n}+\sqrt{m})$$
    \end{theorem}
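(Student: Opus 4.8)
The plan is to prove the two bounds separately: the lower bound is a direct counting argument on differences, and the upper bound generalizes the simple one-dimensional square-root ruler into two dimensions.

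For the lower bound, I would count the vectors that any $(n,m)$-ruler $R$ is forced to measure. A vector $(x,y)$ with $\abs{x}\le n-1$ and $\abs{y}\le m-1$ ranges over a box of $(2n-1)(2m-1)$ lattice points, so the number of nonzero vectors that must be realised as differences is exactly $(2n-1)(2m-1)-1 = 4mn-2(m+n)$. Each nonzero difference $r_1-r_2$ arises from an ordered pair of distinct marks, and there are at most $M(M-1)$ such pairs, where $M=\mathrm{M}_2(n,m)=\abs{R}$. Hence $M(M-1)\ge 4mn-2(m+n)$, and solving this quadratic inequality for its positive root gives
$$M \ge \tfrac12 + \sqrt{4mn-2(m+n)+\tfrac14},$$
which is precisely the claimed lower bound.

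For the upper bound, I would imitate the simple $1$D ruler $\{0,1,\dots,a-1\}\cup\{0,a,2a,\dots\}$, in which a short fine block measures all small distances among its own marks, while each larger distance $ka+r$ is measured between one coarse mark and one fine mark. The $2$D analogue I propose is a fine block $F=\{0,\dots,a-1\}\times\{0,\dots,b-1\}$ together with coarse grids of the form $\{0,a,2a,\dots\}\times\{0,b,2b,\dots\}$. Since the two coordinates of a difference $c-f$ with $c$ coarse and $f\in F$ can be controlled independently, the $1$D argument applies coordinate-wise: $F$ absorbs the remainders in each coordinate and the coarse grid supplies the quotients. This measures every vector whose two coordinates follow a prescribed sign pattern.

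The subtlety, and the reason the constant is $2\sqrt{2}$ rather than $2$, is the sign pattern. A single fine block together with one coarse grid reaches only the vectors $(x,y)$ whose coordinates share a sign (and their negatives), because all marks are nonnegative and one coarse grid spreads in a single diagonal direction. To reach the opposite-sign quadrant I would add a second coarse grid, anchored at the top edge and descending, while the block's own self-differences cover the short vectors in all four directions. With two coarse grids each contributing about $mn/(ab)$ marks and the block contributing $ab$ marks, the total is about $ab + 2mn/(ab)$, minimised at $ab=\sqrt{2mn}$ to give $2\sqrt{2}\sqrt{mn}+\mathrm{O}(\sqrt{n}+\sqrt{m})$, the error term collecting the boundary marks and the effect of rounding $a,b$ to integers. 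The main obstacle is the completeness verification of this construction: one must exhibit, for every admissible $(x,y)$, an explicit pair of marks realising it, carefully treating the four sign quadrants, the coordinate axes, and the vectors shorter than the block, all while keeping the coarse grids inside $\{0,\dots,n-1\}\times\{0,\dots,m-1\}$. Once completeness is established, extracting the constant $2\sqrt{2}$ and the $\mathrm{O}(\sqrt{n}+\sqrt{m})$ term is a routine optimisation over the block dimensions.
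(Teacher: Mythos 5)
Your lower bound is correct and is essentially the paper's own argument in ordered-pair form: the paper counts the $2nm-n-m$ vectors up to the symmetry $(x,y)\mapsto(-x,-y)$ and compares with $\binom{M}{2}$ unordered pairs of marks, whereas you count all $(2n-1)(2m-1)-1 = 4nm-2(m+n)$ nonzero vectors against $M(M-1)$ ordered pairs. The two inequalities are identical and both solve to the stated bound, so this half is fine.

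The upper bound, however, has a genuine gap, and it sits exactly at the step you deferred as routine. With a single fine block $F$ in one corner, say the bottom-left, no collection of coarse product grids can cover the mixed-sign quadrant. In any difference of the form grid-minus-block, both coordinates of the grid mark are reduced by the small block coordinates, so a coordinate of the difference can be large only if it is \emph{positive}; block-minus-grid symmetrically yields only large negative coordinates. Anchoring the second grid at the top edge does not help: a coarse product grid with spacings $(l,k)$ spans the whole rectangle regardless of where it is anchored, so flipping it only changes its offset; for a top-anchored grid $G$ and a bottom-left block one still gets $y$-coordinates of $G-F$ bounded below by roughly the block height, never reaching $-(m-1)$. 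Grid-minus-grid differences do reach all four quadrants, but each pair of grids realizes only a single residue class modulo $(l,k)$, so covering the roughly $lk$ residue classes of the mixed quadrant that way would cost on the order of $mn$ marks. Hence a vector such as $(x,-y)$ with $x\approx n/2$, $y\approx m/2$ and generic residues is simply not measured by your construction. The paper resolves this with the dual arrangement: \emph{two} fine blocks and \emph{one} grid, namely $R_1$ at the bottom-left, its translate $R_2=R_1+(n-l,0)$ at the bottom-right, one coarse grid $R_3$, and $\mathrm{O}(n/l+m/k)$ boundary marks $R_4$. Then $R_3-R_1$ covers the quadrant with both coordinates positive, while $R_2-R_3$ covers the mixed quadrant, because marks of $R_2$ have large $x$-coordinate but small $y$-coordinate. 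The count becomes $2lk+mn/(lk)+\mathrm{O}(n/l+m/k)$, which optimizes to the same constant $2\sqrt{2}$ as your $ab+2mn/(ab)$ — your arithmetic for the constant is correct; it is the geometry of the one-block layout that fails.
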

    \begin{proof}
        For the upper bound, following construction gives a working 2D-ruler. Let $R =R_1 \cup R_2 \cup R_3 \cup R_4$ where:
        \begin{align*}
            R_1 &= \{0,2,\ldots l\}\times \{1,2,\ldots k-1\} \\
            R_2 &= R_1+(n-l,0) \\
            R_3 &= \{(x,y) \vert x= 0 \mod l \text{ and } y = 0 \mod k\} \\
            R_4&=\{(n,m),(n,y), (x,m) \vert x=0 \mod l \text{ and } y=0 \mod k\}
        \end{align*} for some $1\leq l\leq n$ and $1 \leq k \leq m$. Figure $\ref{2Drulerpicture}$ demonstrates this ruler, and it is easy to see from the figure that in fact can measure all vectors. Counting the number of elements in each set we have $\abs{R_1} =\abs{R_2}= lk$, $\abs{R_3} = \lfloor \frac{nm}{lk} \rfloor$ and $\abs{R_4} \leq \lceil \frac{n}{l} +  \frac{m}{k} \rceil$.
        Now the number of elements in this ruler is 
        $$\abs{R} \leq \abs{R_1} + \abs{R_2}+\abs{R_3}+ \abs{R_4} \leq 2lk + \frac{nm}{lk} + \frac{n}{l}+ \frac{m}{k} +1.$$ Here if we pick $l=\lfloor \frac{\sqrt{n}}{\sqrt[4]{2}}\rfloor$ and $k=\lfloor \frac{\sqrt{m}}{\sqrt[4]{2}}\rfloor$ we reach the claimed number of marks.

        For the lower bound, first note that when a 2D-ruler can measure vector $(x,y)$, it then can also measure vector $(-x,-y)$ with the same marks. Let $v_{n,m}$ be the number of vectors that $(n,m)$-ruler needs to measure with pairwise different mark pairs. This number can be counted by first considering vectors without a zero component. We have the vectors $(x,y)$ where $x \in \{1,2,\ldots n-1\}$ and  $y \in \{1,2,\ldots m-1\}$. Then by changing those vectors into $(-x,y)$ we get another set of vectors that need to be measured differently from the previous vectors. Finally, we have $n+m$ vectors where one coordinate is zero. Together, this makes up $2(n-1)(m-1) + n + m = 2nm -n -m$ vectors that have to be measured by different mark pairs. The number of mark pairs then has to be greater than this, so $$ \binom{\mathrm{M}_2(n,m)}{2} \geq 2nm -n -m,$$ leading to the lower bound.
    \end{proof}

To make this construction into a two dimensional unbordered word, we can fill marks in $R_1$ and $R_2$ with a single letter, say $a$, and $R_3$ and $R_4$ with another letter, say $b \neq a$. The problem is that the sets have common elements, and we cannot assign multiple letters to single coordinate. To solve this problem, we can fill $R_1$ and $R_2$ with the first letter and add additional markers to measure the distances we miss by doing so. This means that we have to give the construction new ways to measure vectors that are of the form $r_2 - (0,0)$ and $(n,0)-r_1$, where $r_2 \in R_2$ and $r_1 \in R_1$. This can be done by adding letters on top of the areas $R_1$ and $R_2$. The number of added marks is $\mathrm{O}(\sqrt{n}+\sqrt{m})$, so we keep the same growth rate of the construction with even binary alphabet. This is shown more clearly in figure $\ref{binaryword2Druler}$. We summarise these observations in theorem below, where $\mathrm{HB}2_k(n,m)$ denotes the maximum number of holes a two dimensional $n$ by $m$ unbordered partial word over $k$-sized alphabet can have.

\begin{theorem} For all $k \geq 2$ and $mn\geq 1$,
    $$ nm-2\sqrt{2}\sqrt{nm}-\mathrm{O}(\sqrt{n}+\sqrt{m}) \leq \mathrm{HB}2_k(n,m)$$
    \label{2D words}
\end{theorem}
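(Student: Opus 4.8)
The plan is to build an explicit binary word from the optimal ruler of Theorem~\ref{best construction} and then verify the unbordered condition by a two-colouring argument. First I would take the ruler $R = R_1 \cup R_2 \cup R_3 \cup R_4$ constructed there, which satisfies $\abs{R} \leq 2\sqrt{2}\sqrt{mn} + \mathrm{O}(\sqrt{n}+\sqrt{m})$, and define a partial word $w$ on the $n$ by $m$ grid by writing a letter at every point of $R$ and a hole $\hole$ at every other cell. Then $\mathrm{D}(w)=R$ is a complete $(n,m)$-ruler, and the number of holes of $w$ is exactly $nm-\abs{R} \geq nm - 2\sqrt{2}\sqrt{mn} - \mathrm{O}(\sqrt{n}+\sqrt{m})$. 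So the entire problem reduces to choosing the letters on the marks so that $w$ is unbordered while using only two of them: a word over the alphabet $\{a,b\}$ is also a word over any alphabet of size $k\geq 2$, so proving the bound for $k=2$ proves it for all $k \geq 2$ at once.

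For the colouring I would assign the letter $a$ to the two filled blocks $R_1$ and $R_2$ and the letter $b$ to the grid-and-boundary sets $R_3$ and $R_4$. By the definition of an unbordered two-dimensional word, I then have to check that every non-zero vector measured by $R$ admits a witnessing pair $r_1,r_2$ with $w(r_1)\neq w(r_2)$. The geometry used in the proof of Theorem~\ref{best construction} shows that almost every measurable vector already occurs as a difference between a point of $R_1\cup R_2$ and a point of $R_3\cup R_4$, that is, between an $a$-mark and a $b$-mark; such vectors are automatically witnessed bichromatically and cause no difficulty.

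The main obstacle is the overlap of the four sets: a coordinate lying in both an $a$-region and a $b$-region can receive only one letter, and resolving every conflict in favour of $a$ destroys precisely the bichromatic witnesses of the vectors of the form $r_2-(0,0)$ with $r_2\in R_2$ and $(n,0)-r_1$ with $r_1\in R_1$. I would repair this by laying a thin strip of additional $b$-marks on top of the blocks $R_1$ and $R_2$, chosen so that each lost vector regains a bichromatic witness. Since these vectors form two one-parameter families running along the boundaries of the blocks, only $\mathrm{O}(\sqrt{n}+\sqrt{m})$ extra marks are required. The delicate bookkeeping step — and where I expect most of the care to be needed — is checking that this repair restores \emph{all} the lost vectors at once while introducing no new monochromatic vector.

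Finally I would assemble the count. The repair contributes only a lower-order term, so the total number of marks remains $2\sqrt{2}\sqrt{mn}+\mathrm{O}(\sqrt{n}+\sqrt{m})$, and the resulting unbordered binary word therefore has at least $nm - 2\sqrt{2}\sqrt{mn} - \mathrm{O}(\sqrt{n}+\sqrt{m})$ holes. Interpreting this over any alphabet of size $k\geq 2$ yields the claimed lower bound on $\mathrm{HB}2_k(n,m)$.
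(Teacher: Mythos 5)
Your proposal matches the paper's own argument essentially step for step: the paper also takes the ruler $R=R_1\cup R_2\cup R_3\cup R_4$ from Theorem~\ref{best construction}, colours $R_1\cup R_2$ with one letter and $R_3\cup R_4$ with the other, identifies the lost vectors as exactly those of the form $r_2-(0,0)$ and $(n,0)-r_1$, and repairs them with $\mathrm{O}(\sqrt{n}+\sqrt{m})$ extra marks placed on top of the blocks $R_1$ and $R_2$, preserving the $2\sqrt{2}\sqrt{mn}$ leading term. The ``delicate bookkeeping'' you flag is likewise left at the level of a picture in the paper (Figure~\ref{binaryword2Druler}), so your write-up is at the same level of rigour and takes the same route.
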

By adapting the arguments from one dimensional case in \cite{Combinatorics-on-partial-word-borders}, we can asymptotically solve the correct growth rate for $\mathrm{HB}2_2$. The argument turns the unbordered partial word to a graph, and then applies Turán's theorem.

\begin{theorem}[Turán's theorem \cite{turan1941external}]
    Given graph $G=(V,E)$ without $k+1$-clique, then $$\abs{E} \leq \frac{k-1}{2k} \abs{V}^2$$ holds.
    \label{turanstheorem}
\end{theorem}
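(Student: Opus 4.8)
The plan is to prove this classical extremal bound by first reducing to the case of complete multipartite graphs and then optimizing over the sizes of the parts. Since establishing the inequality for an edge-maximal $K_{k+1}$-free graph on a fixed vertex set implies it for every $K_{k+1}$-free graph on that set, I would fix $n=\abs{V}$ and let $G$ be a graph on $n$ vertices containing no $(k+1)$-clique and having the maximum possible number of edges.

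The central step is to show that such a $G$ must be complete multipartite, which I would establish by Zykov symmetrization. Concretely, I claim $G$ contains no three vertices $u,v,w$ with $vw\in E$ but $uv,uw\notin E$. Suppose such a triple existed. If $\deg(u)<\deg(v)$ (or symmetrically $\deg(u)<\deg(w)$), I delete $u$ and insert a clone of $v$, that is, a new vertex adjacent to exactly the neighbours of $v$ and non-adjacent to $v$ itself; this keeps the graph $(k+1)$-clique-free because the two copies of $v$ are non-adjacent, while it changes the edge count by $\deg(v)-\deg(u)>0$, contradicting maximality. Otherwise $\deg(u)\geq\deg(v)$ and $\deg(u)\geq\deg(w)$, and then I delete both $v$ and $w$ and insert two clones of $u$; the three mutually non-adjacent copies of $u$ again keep the graph $(k+1)$-clique-free, and the net change in edges is $2\deg(u)-\deg(v)-\deg(w)+1\geq 1$, once more a contradiction. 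Hence no such triple exists, so the relation ``$x=y$ or $xy\notin E$'' is an equivalence relation on $V$: its classes are independent sets, and two vertices lying in distinct classes are adjacent by construction, so $G$ is complete multipartite. Since $G$ has no $(k+1)$-clique, it has at most $k$ parts.

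It then remains only to bound the edges of a complete multipartite graph whose parts have sizes $n_1,\dots,n_p$ with $p\leq k$ and $\sum_i n_i=n$. Writing this edge count as $\tfrac12(n^2-\sum_i n_i^2)$ and applying Cauchy--Schwarz in the form $\sum_i n_i^2\geq n^2/p\geq n^2/k$ gives $\abs{E}\leq\tfrac12(n^2-n^2/k)=\frac{k-1}{2k}n^2$, which is the claim. I expect the main obstacle to be the bookkeeping in the symmetrization step: one must verify carefully that each cloning operation genuinely preserves the absence of a $(k+1)$-clique and that the edge-count differences are computed correctly, in particular that the edge $vw$ deleted in the second case is not double-counted, since a sign slip there would destroy the argument. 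A self-contained alternative would be induction on $n$: extract a $k$-clique $A$ from the extremal $G$, bound the edges inside $A$, the edges between $A$ and $V\setminus A$ (at most $k-1$ per outside vertex, else a $(k+1)$-clique appears), and the edges inside $V\setminus A$ by the induction hypothesis, then sum the three contributions; this avoids cloning but demands equal care in reaching the exact constant.
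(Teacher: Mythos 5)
Your proposal is correct, but there is nothing in the paper to compare it against: the paper states Tur\'an's theorem as a classical result with a citation to Tur\'an's 1941 paper and uses it as a black box in the proof of Theorem \ref{2D upperbound}, giving no proof of its own. On the merits, your Zykov symmetrization argument is sound. In the first case, $uv \notin E$ guarantees that deleting $u$ does not change $\deg(v)$, so the edge count changes by exactly $\deg(v) - \deg(u) > 0$; in the second case you correctly insert the $+1$ compensating for the fact that $\deg(v) + \deg(w)$ counts the removed edge $vw$ twice, and $uv, uw \notin E$ ensures $\deg(u)$ is unaffected by the deletions; and cloning preserves the absence of a $(k+1)$-clique because pairwise non-adjacent copies of a vertex can contribute at most one vertex to any clique. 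The conclusion that non-adjacency is transitive, hence an equivalence relation, hence that the extremal graph is complete multipartite with at most $k$ parts, is the standard and correct reading, and the final step $\sum_i n_i^2 \geq n^2/p \geq n^2/k$ via Cauchy--Schwarz yields precisely the weak quadratic bound $\frac{k-1}{2k}\abs{V}^2$ stated here --- appropriately so, since the paper needs only this form rather than the sharp Tur\'an-graph bound. Your fallback suggestion (induction by extracting a $k$-clique) would also reach the same constant, but the symmetrization route you carried out is complete as written and needs no repair.
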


\begin{theorem}[Essentially in  \cite{Combinatorics-on-partial-word-borders}] For all $k \geq 2$ and $mn\geq 1$
$$\mathrm{HB2}_k(n,m) \leq mn- \sqrt{\frac{2k}{k-1}(2nm-n-m)}.$$
\label{2D upperbound}
\end{theorem}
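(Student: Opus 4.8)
The plan is to mirror the one-dimensional Tur\'an argument that underlies the right-hand inequality in Theorem \ref{original bounds}, now feeding in the two-dimensional vector count that already appears in the lower-bound half of Theorem \ref{best construction}. Fix an $n$ by $m$ unbordered partial word $w$ over a $k$-letter alphabet with $h$ holes, and set $N = nm - h = \abs{\mathrm{D}(w)}$ for the number of filled positions. First I would recall from the proof of Theorem \ref{best construction} that the number of distinct nonzero vectors which $\mathrm{D}(w)$ must measure, counted up to the sign symmetry $v \leftrightarrow -v$, equals $\tfrac12((2n-1)(2m-1)-1) = 2nm - n - m$. By the definition of an unbordered two-dimensional word, each such vector $v$ is witnessed by a pair $r_1, r_2 \in \mathrm{D}(w)$ with $r_1 - r_2 = v$ and $w(r_1) \neq w(r_2)$; I shall call such a pair \emph{bichromatic}.

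The key observation is that an unordered pair $\{r_1, r_2\}$ of marks determines exactly one vector up to sign, so distinct vectors are necessarily witnessed by distinct bichromatic pairs, giving at least $2nm - n - m$ bichromatic pairs. Next I would build the graph $G$ whose vertex set is $\mathrm{D}(w)$ and whose edges are exactly the bichromatic pairs. Partitioning the vertices according to which of the $k$ letters they carry exhibits $G$ as $k$-partite, so $G$ contains no $(k+1)$-clique, and Tur\'an's theorem (Theorem \ref{turanstheorem}) yields $\abs{E(G)} \leq \tfrac{k-1}{2k} N^2$. Combining the two estimates gives
$$2nm - n - m \leq \abs{E(G)} \leq \frac{k-1}{2k} N^2,$$
whence $N \geq \sqrt{\tfrac{2k}{k-1}(2nm-n-m)}$. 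Since $h = nm - N$, this rearranges directly to the claimed bound on $\mathrm{HB2}_k(n,m)$.

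The only genuinely delicate point is the vector bookkeeping and the claim that distinct vectors force distinct edges; everything else is a routine transcription of the one-dimensional proof with $n-1$ replaced by $2nm-n-m$ and $n$ replaced by $nm$. So the hard part will be making the count airtight: one must confirm that the pairing $v \leftrightarrow -v$ is exactly the redundancy present (a pair of marks measures $v$ and $-v$ simultaneously and nothing else), that the zero vector is correctly excluded, and that all $2nm - n - m$ surviving vectors really lie in the measuring range and hence each demand a bichromatic witness. Once this is pinned down, the Tur\'an step and the final algebra are immediate, and the degenerate case $nm = 1$ (where the right-hand side reduces to $nm$) holds trivially.
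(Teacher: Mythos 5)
Your proof is correct and takes essentially the same route as the paper's: it uses the count of $2nm-n-m$ sign-classes of vectors requiring pairwise distinct bichromatic mark pairs (from the lower-bound half of Theorem \ref{best construction}), builds the $k$-partite incompatibility graph on the filled positions, applies Tur\'an's theorem (Theorem \ref{turanstheorem}), and solves for the number of holes. Your write-up is in fact somewhat more explicit than the paper's on the one delicate point, namely that a pair of marks measures exactly one vector up to sign, so distinct sign-classes force distinct edges.
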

\begin{proof}
    In the proof of Theorem $\ref{best construction}$ we saw that $(n,m)$-ruler needs to have at least $2nm-n-m$ different pairs of marks. In unbordered partial words, this means that the word needs at least that many different pairs of letters such that pairs do not have the same letter. Consider the unbordered partial word over $k$-sized alphabet as a graph, where the nodes are the letters and there is edge between two letters if the letters are not compatible. Then this graph is obviously $k$-colorable and thus has no $k+1$-clique. By Theorem $\ref{turanstheorem}$, we then have $$2nm- n-m \leq \frac{k-1}{2k}(mn-\mathrm{HB2}_k(n,m))^2,$$ where by solving for $\mathrm{HB2}_k(n,m)$ we get the proof of the theorem.
\end{proof}
Combining Theorems $\ref{2D words}$ and $\ref{2D upperbound}$ we see that $$\mathrm{HB}2_2(n,m) = mn- 2\sqrt{2}\sqrt{mn} + \mathrm{O}(\sqrt{n}+\sqrt{m}).$$

\begin{figure}[!h]
    \centering
    \includegraphics[width=\textwidth]{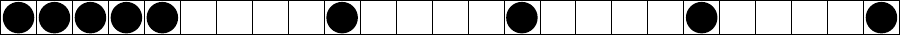}
    \caption{One dimensional ruler of length $24$ (so there is $25$ ``slots'').}
    \includegraphics[width=\textwidth]{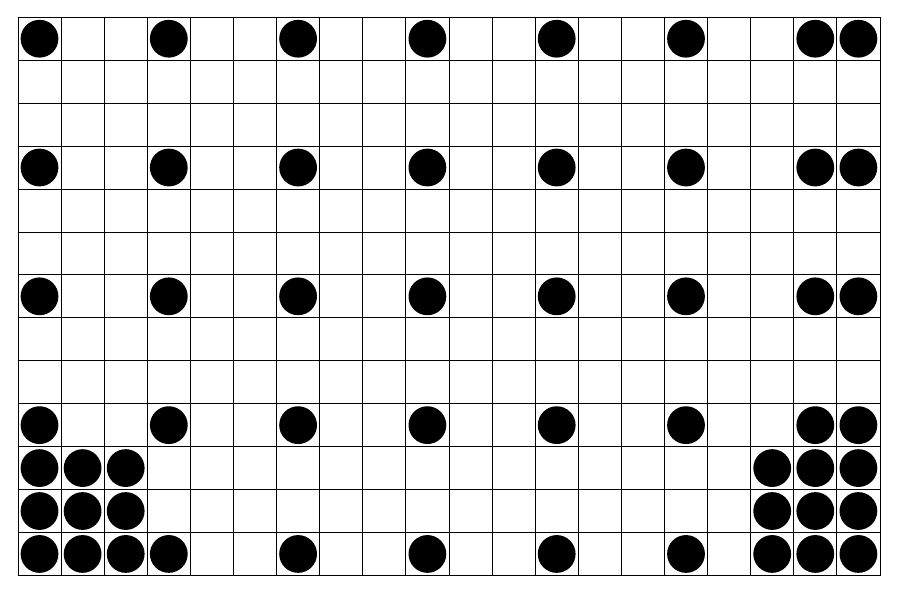}
    \caption{$(20,13)$-ruler constructed in Theorem $\ref{best construction}$ with parameters $l=k=3$. Note that this is just 2D version of simple 1D construction above.}
    \label{2Drulerpicture}
\end{figure}

\newpage

\begin{figure}[!h]
    \centering
    \includegraphics[width=\textwidth]{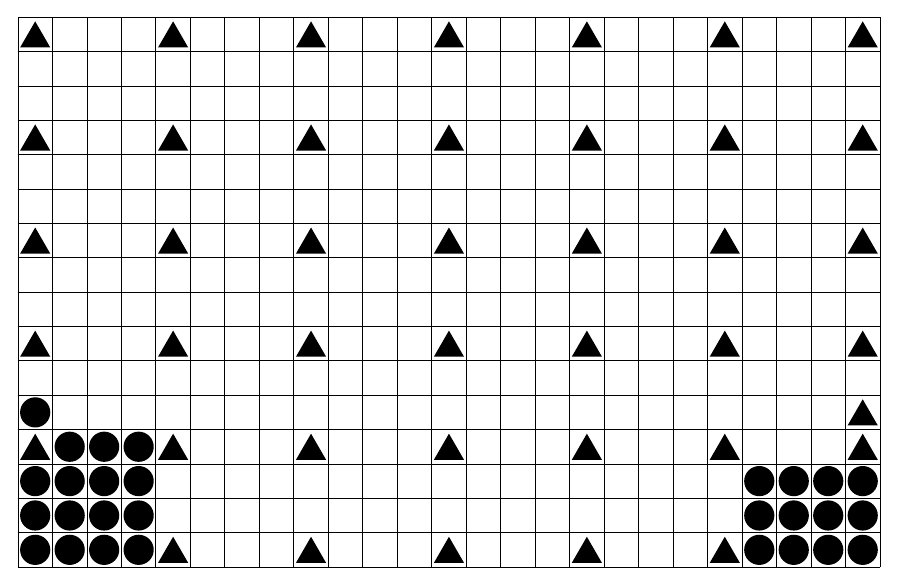}
    \caption{A 2D ruler turned into 2D unbordered binary partial word. Here the alphabet consists of disks and triangles and an empty cell denotes hole. Note the few extra marks placed on top of the rectangle areas compared to figure $\ref{2Drulerpicture}$.}
    \label{binaryword2Druler}
\end{figure}

\section{Quick note on cross-bifix-free codes}

Lastly we note a quick observation related to so called cross-bifix-free codes of fixed length. A code $C$ is cross-bifix-free if for any codewords $c_1 \in C$ and $c_2 \in C$ there is no proper prefix of $c_1$ which is also suffix of $c_2$. More formally $\forall c_1,c_2 \in C$:
$$ c_1=xy_1 \land c_2=y_2x \implies c_1=c_2=x \lor x=\varepsilon.$$
These codes and specially the number of codewords in maximal cross-bifix-free codes have seen recent interest, see \cite{stanovnik2024search} for a summary of recent results.

Related to our topic, given alphabet $A$ and a partial word $w \in A_\hole^*$ we can define a language by filling all the holes in the given word by letters.
\begin{definition}
    $$L_\uparrow (w) = \{u  \in A^* \vert w \uparrow u \}$$
\end{definition}
 
If $w \in A_{\hole}^*$ is an unbordered partial word with $k$ holes,
then it is easy to see that $L_{\uparrow}(w)$ is a cross-bifix-free code of size $|A|^k$.
In the binary case, for example, this idea would give a subset of $A^n$ of size $2^{\lfloor n-2\sqrt{n-1} \rfloor}$ by taking $w$ to be one of the optimal unbordered binary partial words found in \cite{Combinatorics-on-partial-word-borders}.
It is known that the maximal size of a cross-bifix-free subset of $A^n$ is much larger,
see~\cite{stanovnik2024search},
but the partial word construction is worth mentioning because it is particularly simple,
and also because it gives another connection between different research topics.

In the two-dimensional case, this idea becomes perhaps more interesting
because the previously known results are not as strong as in the one-dimensional case.
Again, if $w$ is an unbordered two-dimensional partial word with $k$ holes,
then $L_{\uparrow}(w)$ is a cross-bifix-free code of size $|A|^k$.
Therefore, we can use Theorem \ref{2D words}
to get a cross-bifix-free set of binary $n$-by-$m$-words of size \\ $2^{nm-2\sqrt{2}\sqrt{nm}-\mathrm{O}(\sqrt{n}+\sqrt{m})}$, where the exact value of the exponent is easy to calculate once the seed word is fixed.
This could be compared with existing constructions, such as the ones in \cite{anselmo2017non},
\cite{barcucci2017non},
\cite{barcucci2021non},
and we suspect it could be used as a basis for a more complicated construction.
A detailed comparison and a study of improved constructions
is a potential direction for future research.

\section{Conclusions}
In sections $2$ and $3$ we showed a new connection between unbordered partial words and complete sparse rulers. Using this connection and earlier ruler results, we get new bounds on the function $\mathrm{HB}_k$ when $k\geq4$. The new bounds are
$$ n-\sqrt{3n-3}-4 \leq \mathrm{HB}_4 (n) \leq \mathrm{HB}_k (n) \leq \mathrm{HB}_\infty (n) \leq n- \sqrt{2.43n}, $$ where the lower bound asymptotically beats previous lower bounds and the upper bound does so when $k\geq 6$.

We also provided a direct counterexample to the claim $\mathrm{HB}_k(n+1) \leq \mathrm{HB}_k(n) \\ +1$ for all $n$ and $k$. The counterexample shows that this is not the case for $k\geq4$. Interestingly for $k=2$ the claim holds, but the case $k=3$ is unclear.
\begin{conjecture} For all $n\geq 1$
$$\mathrm{HB}_3(n+1) \leq \mathrm{HB}_3(n)+1.$$
    
\end{conjecture}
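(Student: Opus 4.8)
The plan is to recast the conjecture as a monotonicity statement about a colored version of the ruler function, and then to attack that monotonicity by a deletion-and-repair argument on an optimal configuration. Reading the proof of Theorem \ref{correspondense theorem} while tracking letters (and not merely the domain) shows that an unbordered word of length $\ell+1$ over a $k$-letter alphabet is exactly the data of a complete sparse ruler $R\subseteq\{0,\dots,\ell\}$ together with a $k$-coloring of its marks such that every distance $d\in\{1,\dots,\ell\}$ is realized by some pair of marks of different colors; the number of holes is $(\ell+1)-\abs{R}$. Writing $\mathrm{M}_1^{(k)}(\ell)$ for the least number of marks in such a $k$-colored complete ruler of length $\ell$, the first step is to record the identity $\mathrm{HB}_k(\ell+1)=(\ell+1)-\mathrm{M}_1^{(k)}(\ell)$, the fixed-alphabet refinement of Corollary \ref{optimal corresponding}. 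With this, the conjecture for $k=3$ becomes precisely the assertion that $\mathrm{M}_1^{(3)}$ is non-decreasing, i.e. $\mathrm{M}_1^{(3)}(n-1)\le\mathrm{M}_1^{(3)}(n)$. This framing is consistent with the known cases: for $k=2$ one has $\mathrm{M}_1^{(2)}(\ell)=\lceil 2\sqrt{\ell}\rceil$, which is monotone, whereas for large $k$ the colored function coincides with $\mathrm{M}_1$, whose non-monotonicity is exactly the counterexample.

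To prove monotonicity I would start from an optimal $3$-colored complete ruler $R$ of length $n$ with $m=\mathrm{M}_1^{(3)}(n)$ marks; since distance $n$ can only be measured by the pair $(0,n)$, both endpoints lie in $R$ and carry different colors. Delete the endpoint $n$ to obtain $R_0=R\setminus\{n\}\subseteq\{0,\dots,n-1\}$, a candidate ruler for length $n-1$ with $m-1$ marks and the inherited coloring. Let $S\subseteq\{1,\dots,n-1\}$ be the set of distances that $R_0$ no longer measures bichromatically. For $d\in S$ the only distance-$d$ pair of $R$ meeting $n$ is $(n-d,n)$, so this pair was the unique bichromatic witness of $d$ in $R$; in particular $n-d\in R$, the color of $n-d$ differs from that of $n$, and every other distance-$d$ pair of $R$ is monochromatic. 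If $S=\varnothing$ we already get $\mathrm{M}_1^{(3)}(n-1)\le m-1$ and are done, so the whole difficulty is concentrated in repairing the distances in $S$.

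The repair has a budget of one extra mark: adding a single well-chosen, well-colored mark back into $\{0,\dots,n-1\}$ returns us to $m$ marks, so it suffices to show that one such mark, possibly together with a recoloring of finitely many existing marks, can witness every $d\in S$ simultaneously. The crucial structural input, and the reason the statement should hold for $k=3$ yet fail for $k=\infty$, is that the coloring constraint makes the lost distances highly non-generic: all marks $n-d$ with $d\in S$ avoid the color of $n$, so they sit inside a two-colored sub-ruler, and for each such $d$ every distance-$d$ pair other than $(n-d,n)$ is monochromatic. By contrast, in the uncolored case every pair is automatically a witness, so a single endpoint can be the sole witness of arbitrarily many distances at once --- this is exactly the mechanism behind the non-monotonicity of $\mathrm{M}_1$ used in the counterexample, and it is precisely what the $3$-coloring forbids.

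I expect the main obstacle to be proving that one added mark really does cover all of $S$. I would try to bound $\abs{S}$ and locate the missing distances using the two-colored structure of the witnesses $\{n-d\}$, and then either exhibit an explicit position near the right end --- mirroring the role of the deleted endpoint --- whose distances to $R_0$ realize all of $S$, or run an exchange/recoloring argument that trades the lost witnesses for new ones without disturbing the already-covered distances. A minimal-counterexample setup would let me assume monotonicity for all smaller lengths and thereby control the short distances, reducing the problem to the behavior near the endpoint $n$. Turning this structural intuition into a clean proof that a single mark always suffices is the heart of the matter, and is where the restriction to three colors must be used in an essential way.
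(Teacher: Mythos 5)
The statement you are attempting is stated in the paper as a \emph{conjecture} --- the authors explicitly say the case $k=3$ is open --- so there is no proof in the paper to compare against, and your proposal must stand on its own. Its first step is sound and worth having: tracking letters (not just the domain) through the proof of Theorem~\ref{correspondense theorem} does give the exact equivalence between unbordered words of length $\ell+1$ over $k$ letters and $k$-colored complete rulers of length $\ell$ in which every distance has a bichromatic witness pair, and hence the identity $\mathrm{HB}_k(\ell+1)=(\ell+1)-\mathrm{M}_1^{(k)}(\ell)$, so the conjecture is indeed equivalent to monotonicity of the colored ruler function $\mathrm{M}_1^{(3)}$.

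Everything after that, however, is a plan rather than a proof, and the step you defer --- that one added mark, possibly with recoloring, repairs every lost distance in $S$ --- is precisely the open content of the conjecture; nothing in the proposal makes progress on it. Concretely: (i) your added mark has almost no freedom, since a colored ruler of length $n-1$ must bichromatically measure the distance $n-1$, whose only witness pair is $(0,n-1)$; thus if $n-1\notin R$ the single mark is forced to sit at $n-1$ with a color different from that of $0$, and then each $d\in S$ requires $n-1-d\in R_0$ with a compatible color, a condition over which you have no control; (ii) recoloring existing marks to fix distances in $S$ can destroy the unique bichromatic witnesses of distances \emph{outside} $S$, and you offer no mechanism to avoid this simultaneous-constraint problem; (iii) you give no bound on $\abs{S}$ --- the observation that the witnesses $n-d$ lie in a two-colored sub-ruler does not visibly limit how many distances the deleted endpoint can solely witness, so your heuristic for why three colors block the $\mathrm{M}_1$ counterexample mechanism is an analogy, not an argument. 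There is also a structural worry about the whole scheme: deletion-and-repair presupposes that a length-$(n-1)$ configuration with at most $\mathrm{M}_1^{(3)}(n)$ marks can be found within one local move of an optimal length-$n$ one, which could fail even if the conjectured monotonicity is true for global reasons. In short: a correct and useful reformulation, but the heart of the matter remains a genuine gap.
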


Given that the best known ruler construction can be converted to a word over an alphabet of size $4$ and the ruler construction is thought to be optimal, it really seems that the four letter alphabet gives the optimal unbordered partial words. This was conjectured already by Blanchet-Sadri et al. \cite{blanchet2009many}, and now the conjecture has new interest in the study of sparse rulers.
\begin{conjecture}$\label{alphabetsize conjecture}$ For all $n\geq1$
    $$\mathrm{HB}_4(n) = \mathrm{HB}_\infty(n)$$
\end{conjecture}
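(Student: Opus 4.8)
The plan is to reduce the conjecture to a coloring problem on optimal rulers and then attack it through the known ruler constructions. The inequality $\mathrm{HB}_4(n) \le \mathrm{HB}_\infty(n)$ is immediate, since any word that is unbordered over a four-letter alphabet is also unbordered over a larger one. For the reverse inequality, recall from Corollary \ref{optimal corresponding} that $\mathrm{HB}_\infty(n) = n - \mathrm{M}_1(n-1)$, and from Theorem \ref{correspondense theorem} that an unbordered partial word of length $n$ corresponds to a complete sparse ruler of length $n-1$ whose marks carry letters in such a way that every measurable distance is witnessed by a pair of marks bearing different letters. Hence $\mathrm{HB}_4(n) = \mathrm{HB}_\infty(n)$ is \emph{equivalent} to the statement that some complete sparse ruler $R$ of length $n-1$ with exactly $\mathrm{M}_1(n-1)$ marks admits a map $c : R \to \{1,2,3,4\}$ such that for every $d \in \{1, \dots, n-1\}$ there exist $r, s \in R$ with $r - s = d$ and $c(r) \neq c(s)$. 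Call such a $c$ a \emph{valid} four-coloring. The conjecture thus becomes: every $n$ possesses an optimal ruler with a valid four-coloring.

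The first main step is to dispatch all large $n$ using Wichmann rulers. Theorem \ref{wichmannword4} already supplies, for every extended Wichmann ruler, an explicit valid four-coloring; indeed this is exactly the information encoded by the Wichmann word $W(r,s,i,j)$. Therefore, if one grants the conjecture recorded after Theorem \ref{wichmannword4} that Wichmann rulers with $m > 13$ marks achieve the longest length for their number of marks, then for all sufficiently large $n$ an optimal ruler of length $n-1$ may be taken to be an extended Wichmann ruler, and Theorem \ref{wichmannword4} finishes those cases. This reduces the problem to finitely many exceptional lengths, for which the existence of a valid four-coloring is a finite constraint-satisfaction instance that can in principle be certified by direct construction or computer search, exactly as was done for the anomalous values $n = 136, 139$ in the counterexample corollary.

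I expect the main obstacle to be twofold, and both parts seem genuinely hard. First, the reduction to finitely many cases rests on the \emph{conjectural} optimality of Wichmann rulers; unconditionally we have no structural classification of minimum-mark rulers, so we cannot even assert that the optimal ruler has an exploitable shape. Second, and more intrinsic, for an arbitrary optimal ruler there is no a priori reason that four colors suffice: a distance $d$ realized by a \emph{unique} pair $\{r,s\}$ forces $c(r) \neq c(s)$, and these forced pairs span a graph $G$ on $R$ whose chromatic number must be shown to be at most $4$ (on at least one optimal ruler) while the remaining, multiply-realized, distances are covered simultaneously. Bounding $\chi(G)$ — or, failing that, proving that among the optimal rulers of each length at least one has $\chi(G) \le 4$ and extends to a valid coloring — is the crux. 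It is precisely here that the special multiplicative structure exploited in the proof of Theorem \ref{wichmannword4}, where distances inside an arithmetic block share a common difference and distinct blocks draw on disjoint letters, would have to be replaced by a general argument valid for rulers whose structure we do not control.
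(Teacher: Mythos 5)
This statement is a \emph{conjecture} in the paper: the authors give no proof of it, and your write-up, read carefully, does not contain one either --- it is a reformulation, a conditional reduction, and an honest list of obstacles. The reformulation itself is correct and is exactly the paper's implicit viewpoint: by Theorem \ref{correspondense theorem} and Corollary \ref{optimal corresponding}, the inequality $\mathrm{HB}_4(n)\leq\mathrm{HB}_\infty(n)$ is trivial, and equality is equivalent to the existence of a complete ruler of length $n-1$ with exactly $\mathrm{M}_1(n-1)$ marks admitting a four-coloring under which every distance $d\in\{1,\dots,n-1\}$ is measured by a bichromatic pair of marks (the endpoints are automatically marked, since $d=n-1$ forces it). Theorem \ref{wichmannword4} is precisely such a coloring for every extended Wichmann ruler, and that is what the paper offers as \emph{evidence} for the conjecture, not as a proof.

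The genuine gap is in your large-$n$ step. The conjecture you invoke --- that Wichmann rulers with more than $13$ marks achieve the longest length --- is a statement about the maximal length attainable per number of marks; it does not imply that for every large $n$ some \emph{minimum-mark} ruler of length $n-1$ is an extended Wichmann ruler. The extension $((r+1)^i, j)$ adds $i+1$ marks to the base ruler, and for lengths strictly between consecutive Wichmann lengths nothing in the paper, nor in your argument, guarantees that the resulting mark count equals $\mathrm{M}_1(n-1)$ rather than exceeding it. Consequently your ``finitely many exceptional lengths'' could a priori be an infinite set, and the concluding finite constraint-satisfaction search has nothing to stand on; the paper's $135/138$ anomaly already illustrates how erratically $\mathrm{M}_1$ behaves between Wichmann lengths. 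So even granting the Wichmann-optimality conjecture, the argument does not close the statement, and unconditionally you have correctly identified the crux yourself: with no structural control over optimal rulers, a distance realized by a unique pair of marks forces a coloring constraint, and nobody has shown the resulting graph is four-colorable on at least one optimal ruler of every length. What you have is a faithful restatement of the problem and of the paper's evidence --- which matches the status this statement has in the paper, namely an open conjecture.
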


We then in section $4$ considered two dimensional generalizations of these ideas. Unsurprisingly in two dimensions it is easy to come up with asymptotically better rulers than just a cartesian product of two one-dimensional rulers. We showed one example, which could also be turned into binary unbordered two dimensional words with just a few extra marks. This reaches the optimal asymptotic for $n$ by $m$ unbordered binary partial word when both $n$ and $m$ are taken to infinity. Without explicit conjecture, we state the open problem of finding better two-dimensional rulers and unbordered partial words.
\bibliographystyle{splncs04}
\bibliography{ref}

\end{document}